\theoremstyle{plain}%
\newtheorem{theorem}{Theorem}
\newtheorem{proposition}[theorem]{Proposition}%
\newtheorem{lemma}[theorem]{Lemma}
\newtheorem{corollary}[theorem]{Corollary}
\theoremstyle{remark}%
\newtheorem{remark}{Remark}%
\theoremstyle{definition}%
\newtheorem{definition}{Definition}%
\numberwithin{equation}{section}
\numberwithin{theorem}{section}
\newcommand{\inn}[2]{\left\langle#1,\,#2\right\rangle}
\newcommand{\grad}{\nabla}
\newcommand{\Lap}{\Delta}
\newcommand{\di}{\partial}
\DeclareMathOperator{\Tr}{Tr}
\newcommand{\br}[1]{\left\langle#1\right\rangle}
\newcommand{\si}{\sigma}
\newcommand{\eps}{\epsilon}
\newcommand{\ls}{\lesssim}
\newcommand{\g}{\gamma}
\newcommand{\al}{\alpha}
\newcommand{\Si}{\Sigma}
\newcommand{\Cb}{\mathbb{C}}
\newcommand{\Xc}{\mathcal{X}}
\newcommand{\Mc}{\mathcal{M}}
\newcommand{\Ac}{\mathcal{A}}
\newcommand{\Zb}{\mathbb{Z}}
\newcommand{\Rb}{\mathbb{R}}
\newcommand{\one}{\ensuremath{\mathbf{1}}}
\newcommand{\Sc}{\ensuremath{\mathcal{S}}}
\newcommand{\abs}[1]{\ensuremath{\left\lvert#1\right\rvert}}
\newcommand{\norm}[1]{\ensuremath{\lVert#1\rVert}}
\newcommand{\Set}[1]{\left\{#1\right\}}
\newcommand{\fullfunction}[5]{\ensuremath{
		\begin{array}{ccrcl}
			{#1}    & \colon  & {#2} & \longrightarrow & {#3} \\
			\mbox{} & \mbox{} & {#4} & \longmapsto     & {#5}
\end{array}}}
\newcommand{\od}[2]{\ensuremath{\frac{d#1}{d#2}}}
\newcommand{\md}[6]{\ensuremath{
		\ifinner
		\tfrac{\partial{^{#2}}#1}{\partial{#3^{#4}}\partial{#5^{#6}}}
		\else
		\tfrac{\partial{^{#2}}#1}{\partial{#3^{#4}}\partial{#5^{#6}}}
		\fi
}}
\newcommand{\del}[1]{\left(#1\right)}
\newcommand{\thmref}[1]{Theorem~\ref{#1}}
\newcommand{\defnref}[1]{Definition~\ref{#1}}
\newcommand{\secref}[1]{Section~\ref{#1}}
\newcommand{\lemref}[1]{Lemma~\ref{#1}}
\newcommand{\propref}[1]{Proposition~\ref{#1}}
\newcommand{\remref}[1]{Remark~\ref{#1}}
\newcommand{\corref}[1]{Corollary~\ref{#1}}
\newcommand{\wn}{\ensuremath{w^{(0)}}}
\newcommand{\sn}{\ensuremath{\si^{(0)}}}
\newcommand{\wo}{\ensuremath{w^{(1)}}}
\newcommand{\so}{\ensuremath{\si^{(1)}}}
\newcommand{\Un}{\ensuremath{U^{(0)}}}
\newcommand{\Uo}{\ensuremath{U^{(1)}}}
\begin{document}
	
		
		
		
		\title{A generic framework of adiabatic approximation for nonlinear evolutions II}
		
			\author{Jingxuan Zhang 
			(\begin{CJK*}{UTF8}{gbsn}张景宣
			\end{CJK*})$^{1,2}$
			\footnote{Email: \url{jingxuan.zhang@math.ku.dk}}
		}
		\date{%
			$^1$	University of Copenhagen\\
			Department of Mathematical Sciences\\
			Universitetsparken 5\\
			2100 Copenhagen, Denmark\\[2ex]
			$^2$	University of Toronto\\
			Department of Mathematics\\
			40 St George Street\\
			Toronto, ON, M5S 2E4, Canada\\[2ex]
			\today
		}
		
		\maketitle
		
		\begin{abstract}
In this paper, we continue the development of a generic adiabatic scheme for nonlinear evolutions. We consider an abstract gradient flow of some energy functional, together with a given manifold of static solutions arising from broken symmetries. First, we list a number of explicit and generic conditions on the energy functional that ensures the validity of our adiabatic scheme. Then, we construct some explicit low-energy but no necessarily static configurations, which form a stable manifold with finite codimensions for the given gradient flow. Thirdly, we show that the gradient flow is globally well-posed with initial configuration from the stable manifold. Finally, we show that any solution to the full gradient flow starting from the stable manifold is essentially governed by an effective equation on the manifold of static solutions, up to a uniformly small and dissipating error term.
		\end{abstract}

		\section{Introduction}
		Consider the (negative) gradient flow
		\begin{equation}
			\label{1}
			\di_tu=- E'(u).
		\end{equation}
		Here $u=u(t)\in U,\,t\ge 0$ is a $C^1$ path of vectors in some open set $U\subset X'$ in a real Hilbert
		space $X'$, sitting in a larger Hilbert space $X$. 
		The map $E:U\subset X'\to \Rb$ is some energy functional which is $C^2$ on $U$. The vector $E'(u)\in X$ is the gradient of $E$ at $u$ w.r.t. the inner product on $X$. See Appendix for relevant definitions. 
		
		Throughout this paper, we assume the energy functional $E$ has a non-trivial symmetry Lie group, and there exists a static solution $u\equiv u_0\in X$ to \eqref{1} (i.e. an equilibrium state), which simultaneously breaks a symmetry Lie subgroup $\Si$ with $1\le \dim\Si<\infty$. We call this $\Si$ the \textit{reduced symmetry (Lie) group} for \eqref{1}.
		
		If we shrink $U$ to be a small ball around $u_0$, then the standing assumptions above give rise to a finite-dimensional manifold of static solutions (i.e. ensemble of equilibrium states)
		$$\Mc_0:=f(\Si)\cap U,$$ 
		where $f:\Si\to X'$ is an immersion of the reduced symmetry Lie group, given in \eqref{2.6} below. See \secref{sec:2.1} for details on the set up for \eqref{1}. 
		
		For the main result of this paper, \thmref{thm1}, we prove that there exists a finite-codimensional manifold $\Mc$ with
		$$\Mc_0\subset \Mc\subset U,$$
		s.th. the full dynamics \eqref{1} with initial configurations from $\Mc$ essentially reduces to an explicit effective (i.e.  adiabatic) dynamics, namely \eqref{2.23}, posed on the finite-dimensional manifold of static solutions  $\Mc_0$.
		
		In a previous paper \cite{EffDyn}, under a number of generic sufficient conditions, we have developed an adiabatic approximation scheme for \eqref{1}. There, we have assumed global well-posedness for \eqref{1} with initial configurations in $U$, as well as the existence of what we call \textit{approximate solitons}. In this paper, we present a new set of sufficient conditions on the linearized operator of the map $u\mapsto E'(u)$ at the manifold $\Mc_0$. These conditions are similar but less restrictive than the ones from \cite{EffDyn}. The larger manifold $\Mc$ plays a similar role as the manifold of approximate solitons in \cite{EffDyn}, except for the key difference that the effective equation is not posed on $\Mc$, but rather on the smaller manifold $\Mc_0$. See \secref{sec:2.3} for a discussion. 
		
		For motivations and backgrounds, we refer the readers to \cites{EffDyn,AdiabMCF}. We single out a paper by W. Schlag \cite{MR2480603} from which we have drawn much inspiration. An application of the abstract theory developed in this paper is given in \cite{AdiabMCF}, where we study the dynamical stability of the singularities of the mean curvature flow.
		
		\subsection{Arrangement}
		This paper is organized as follows. In \secref{sec:2.1}, we discuss in details the set up for the evolution \eqref{1}. In \secref{sec:2.2}, we present the main assumptions of this paper, \eqref{C1}-\eqref{C4}. Then we state the main result in \secref{sec:2.3}.
		
	The main result, \thmref{thm1}, is proved in two steps in Sections \ref{sec:3}-\ref{sec:6}. The upshot is to eliminate the destabilizing effect of the zero-unstable modes of the linearized operator at the manifold of trivial static solution, see \eqref{L}. 
	
	At the first order, in \secref{sec:3},we eliminate the zero modes using modulation method. Yet this is not enough due to the presence of unstable modes, see \eqref{C2}. Such modulation method is customary in the analysis of solitary wave dynamics in hyperbolic systems, see \cites{MR820338, MR1071238,MR1170476,MR2094474}.
	
	At the next order, we establish a fixed point scheme to eliminate the unstable modes that have persisted the modulation. In \secref{sec:4}, we show that \thmref{thm1} is equivalent to a certain fixed point problem (see \propref{prop4.1}). In \secref{sec:5}, we solve the fixed point problem posed in the preceding section. This gives rise to the set $\Mc$ mentioned above. In \secref{sec:6}, we prove certain smallness estimates on $\Mc$. These estimate then ensure that $\Mc$ form a non-degenerate stable manifold with finite codimensions.  
	
	The second part of the paper can be viewed as constructing a stable manifold to \eqref{1}, and compared to the established invariant manifold theory for dissipative systems, see e.g. \cites{MR1000974,MR1160925,MR1445489,MR1675237,MR2439610}. Here we emphasize that our result gives global-wellposedness as a by product (in contrast to the cited works above), and produce an explicit and tractable effective dynamics for \eqref{1}, namely \eqref{2.23}.

		\subsection*{Notations} Throughout this paper, when no confusion arises, we shall drop the time dependence $t$ in subscripts. 
		An estimate $A\ls B$ means there is some $C>0$ independent 
		of time and all the parameters in question, s.th. $A\le CB$. The expression $A\sim B$ means that
		$A\ls B$ and $B\ls A$ hold simultaneously. The summation convention for repeated indices is always understood.
		
		\section{Set up and the main results}
		In this section, we explain the set up for a generic adiabatic scheme for \eqref{1}. Then we present a list of sufficient conditions for the validity of this scheme, and justify their generality. These assumptions consist of some secondary ones, (f1)-(f2), (h1)-(h2) from \secref{sec:2.1}, which concern with generic properties for \eqref{1} in the presence of symmetry. 	In \secref{sec:2.2}, we present a group of non-trivial assumptions, \eqref{C1}-\eqref{C4}, in terms of certain linearized operators. Lastly, in \secref{sec:2.3}, we state the main result, \thmref{thm1}, under these assumptions, and discuss its implications. 	This theorem is proved in \secref{sec:3}-\secref{sec:6}.
		
		\subsection{Set up}\label{sec:2.1}
			Let $X$ be a Hilbert space with norm $\norm{\cdot}_X$ induced by the inner product $\inn{\cdot}{\cdot}_X$. Let $X'$ be a subspace of $X$, together with a stronger norm $\norm{\cdot}_{X'}$. We understood that $X'$ inherits the inner product $\inn{\cdot}{\cdot}_X$. 
			
			A typical example is $X=L^2(M,d\mu)$, the Lebesgue space of order $2$ over a measure space $(M,d\mu)$, and $X':=H^s$, the Sobolev space of order $s\ge0$. The inner product on both spaces are the $L^2$-one, but $X'$ is equipped with the stronger $H^s$-norm. 

			Let $E:X\to\Rb$ be a $C^2$ energy functional in the the sense that there exists an open set $U\subset X'$ (w.r.t. the topology induced by $\norm{\cdot}_{X'}$) s.th. the map $E':U\subset X'\to X$, sending a vector $u$ to the $X$-gradient $E'(u)$, is well-defined and continuous.

			Denote  by $E''(u):X'\to X$ the Fr\'echet derivative of $E'$ at $u\in U$. Then for every $u$, $E''(u)$ is a bounded self-adjoint operator defined on all of $X'$.   
			See Appendix for relevant definitions.
			
			A typical example arising from statistical field theory  is as follows. Let $(M,g)$ be a sufficiently regular Riemannian manifold, corresponding to a physical domain of interest. Let  $X:=L^2(M,d\mu;\Rb)$, where $d\mu$ is the canonical measure induced by $g$. This $X$ describes a space of order parameter. We take $X'=H^1(M,d\mu;\Rb)$, and define 
$$E(u)=\int \abs{\grad u}^2+G(u),$$
where $G:\Rb \to \Rb$ is an explicit nonlinear function. This $G$ incorporates certain nonlinear phenomen, e.g.  due to mean field approximation.
Suppose there is a critical point $u_0\in X'$ to $E$ with $\abs{E(u_0)}<\infty$, say, describing a pure state, which amounts to a trivial static solution to \eqref{1}. Suppose $G$ satisfies certain regularity assumptions depending on this $u_0$. Then one can show using Sobolev inequalities that $E$ is $C^2$ in a neighbourhood $U\subset X'$ around $u_0$.
			
We assume evolution \eqref{1}
has a finite-dimensional \textit{full symmetry Lie group} $G$ with $\dim G\ge1$.
Precisely, we assume each element $g\in G$ acts on the configuration space $X$ as a linear transformation
\begin{equation}\label{2.0}
	T(g)\in Aut(X),
\end{equation}
satisfying
\begin{equation}
	\label{2.1}
	E(T(g)u)=T(g)E(u)\quad (u\in X, g\in G).
\end{equation}
In applications, such $T(g)$ can be translation, rotation, gauge transform, etc..
Denote by  
\begin{equation}\label{T}
	T:g\mapsto T(g)
\end{equation} the representation of $G$ via automorphism of $X$.

	We assume there exists some $u_0\in X'$ s.th. $E'(u_0)=0$. In applications, this $u_0$ is usually a well-understood ground state. Denote by  $H\subset G$ the maximal closed normal subgroup of $G$ s.th. $T(g) u_0= u_0$ 
for every $g\in H$,
and define the 	\textit{reduced symmetry Lie group} by
\begin{equation}\label{2.4}
	\Si:=G/H.
\end{equation}
We assume $$\dim \Si\ge1.$$ 
Physically,  this means that $u_0$ break parts of the full symmetry group of the energy functional $E$ (i.e. simultaneous symmetry breaking).

In the remaining of this paper, we will only be concerned with the reduced symmetry group $\Si$, instead of the full symmetry group $G$. 

We assume there exists a Riemannian metric  $g$ on $\Si$, which turns the latter into a Riemannian manifold. For example, $\Si$ is a matrix Lie subgroup, then one can take $g(A,B)=\Tr A^*B$. Denote by $$d(\si,\si'),\quad\si,\si'\in\Si$$ the distance induced by this metric $g$ on $\Si$, and $$\norm{\cdot}_Y$$ the norm induced by $g$ on the tangent space $T_\si\Si$. For simplicity, we assume $\norm{\cdot}_Y$ does not change as the base point $\si$ varies in $\Si$.

Define a map
\begin{align}\label{2.6}
\fullfunction{f}{\Si}{X}{\si}{T(\si)u_0}
\end{align}
We assume the map $f$ has the following properties:
\begin{enumerate}
	\item[(f1)] $f(\one_\Si)=u_0$,  $f(\Si)\subset X'$, and $f$ is a $C^2$ immersion as a map from $\Si\to X'$.
	\item[(f2)] There exists $c_f>0$ s.th.
	\begin{equation}
		\label{f2}
		\norm{f}_{C^2}:=\sup_{\si\in\Si_0}\del{\norm{f(\si)}_{X'}+\norm{df(\si)}_{Y\to X'}+\norm{d^2f(\si)}_{Y\to L(Y,X')}}\le c_f\tag{f2}.
	\end{equation} 
\end{enumerate}
Condition (f1) allows one to view $f(\Si)$ as a manifold in $X'$.
Condition (f2) is a technical assumption used to derive various uniform estimates (see the remark below). These conditions are analogous to the one in \cite{EffDyn}*{Sec 2.2}. In applications, (f1)-(f2) mostly follow easily from the explicit form of symmetry $T(\si)$ in \eqref{2.0}.

\begin{remark}
	If $f$ satisfies condition (f1), then there exists an open connected submanifold \begin{equation}
	\label{2.7}\Si_0\subset \Si
\end{equation} around the identity element $\one_\Si$, s.th. \begin{equation}\label{M0}
\Mc_0:=f(\Si_0)\subset U
\end{equation} forms a finite-dimensional immersed manifold, and \eqref{f2} holds. This gives rise to a manifold of static solution to \eqref{1}. In the setting of \cite{EffDyn}, this $f(\Si_0)$ amounts to a manifold of \textit{exact} solitons.
\end{remark}

Shrinking $\Si_0$ if necessary, there exists a local trivialization
\begin{equation}
	\label{h}
	h:\Rb^{\dim \Si}\times \Si_0 \to \bigcup_{\si\in\Si_0}T_\si\Si.
\end{equation}
This allows us to write a typical element in $T_\si \Si\cong Lie(\Si)$ as  $$a^i\xi_i(\si),$$ where 
\begin{equation}\label{xi}
	\xi_i(\si):=h(E_i,\si)\quad (1\le i \dim \Si),
\end{equation} and $\Set{E_i}$ is a basis of $\Rb^{\dim \Si}.$

We assume the trivilization $h$ satisfies 
\begin{enumerate}
	\item[(h1)] For every $\si\in\Si_0$, there holds the orthogonality condition  
	\begin{equation}\label{h1}
		\inn{df(\si)\xi_i(\si)}{df(\si)\xi_j(\si)}_X=\delta_{ij}\tag{h1}.
	\end{equation}
	\item[(h2)] There exists $c_h>0$ independent of $i$ s.th.
	\begin{equation}
		\label{h2}
		\norm{\xi_i(\cdot)}_{C^1}:=\sup_{\si\in\Si_0}\norm{\xi_i(\si)}_Y+\norm{d\xi_i(\si)}_{Y\to Y }\le c_h\tag{h2}
	\end{equation}
Recall that $\norm{\cdot}_Y$ denotes the norm on $T_g\Si$ induced by the Riemannian metric $g$ on $\Si$. 
\end{enumerate}
The role of condition \eqref{h1} will be clear after \defnref{defn1} below. Condition \eqref{h2} plays a similar role as \eqref{f2}. For bounded $\Si_0\subset \Si$, this  follows from the regularity assumption (h1).

Now we can define a class of key objects for the subsequent analysis.
\begin{definition}[symmetry zero modes]
	\label{defn1}
A vector $\phi\in X'$ is called a symmetry zero mode of the linearized operator $L(\si)$	if and only if 
	\begin{equation}
		\label{szm}
		\phi=a^idf(\si)\xi_i(\si)
	\end{equation}
	for some $a^i\in\Rb,\,1\le i\le \dim \Si$. 
\end{definition}
\begin{remark}
	Condition \eqref{h1} ensures that these modes are orthonormal in $X$ as $\si$ varies in $\Si_0$.
\end{remark} 

Denote by 
\begin{equation}
	\label{L}
	L(\si):=E''(f(\si))
\end{equation}
the linearized operator of $E'$ at $f(\si)$. This is well-defined and continuous in $\Si_0$, by the assumption (f1) on $f(\si)$ and the $C^2$ regularity assumption of $E$ on $U$.

\begin{lemma}
	\label{lem2.1}
	If $\phi$ is a symmetry zero mode of $L(\si)$, then $L(\si)\phi=0$. 
\end{lemma}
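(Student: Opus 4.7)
The plan is to exploit the symmetry invariance of $E$ to show that the whole manifold $\Mc_0 = f(\Si_0)$ consists of critical points of $E$, and then obtain $L(\si)\phi = 0$ by differentiating this identity along a curve in $\Si_0$ whose image in $\Mc_0$ has tangent vector $\phi$.

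First, I would argue that $E'(f(\si)) = 0$ for every $\si \in \Si_0$. Since $u_0$ is a critical point, $E'(u_0) = 0$. The invariance \eqref{2.1} together with the definition of $E'$ as the $X$-gradient gives the equivariance $E'(T(\si)u) = T(\si) E'(u)$ for each $\si \in \Si$ and $u \in U$ (formally, differentiate $E(T(\si) u) = E(u)$ in $u$ and use that $T(\si)$ is a bounded automorphism of $X$; the point $u_0 \in X' \subset X$ lies in $U$ by (f1), so the computation is legal). Applying this at $u = u_0$ and recalling \eqref{2.6}, we obtain
\begin{equation*}
    E'(f(\si)) \;=\; E'(T(\si)u_0) \;=\; T(\si) E'(u_0) \;=\; 0 \qquad (\si \in \Si_0).
\end{equation*}

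Next, given a symmetry zero mode $\phi = a^i df(\si)\xi_i(\si)$, I pick a $C^1$ curve $\si:(-\eps,\eps) \to \Si_0$ with $\si(0) = \si$ and $\dot\si(0) = a^i \xi_i(\si) \in T_\si\Si$; such a curve exists by (h2) and the standard ODE theory on the Lie group $\Si$. By condition (f1), $f$ is a $C^2$ map $\Si_0 \to X'$, hence $t \mapsto f(\si(t))$ is a $C^1$ curve in $X'$ with
\begin{equation*}
    \frac{d}{dt}\Big|_{t=0} f(\si(t)) \;=\; df(\si)\,\dot\si(0) \;=\; a^i df(\si)\xi_i(\si) \;=\; \phi.
\end{equation*}
Composing with $E' : U \subset X' \to X$, which is $C^1$ with derivative $E''$ by the $C^2$ assumption on $E$, and using the identity $E'(f(\si(t))) \equiv 0$ from the previous step, I differentiate at $t=0$:
\begin{equation*}
    0 \;=\; \frac{d}{dt}\Big|_{t=0} E'(f(\si(t))) \;=\; E''(f(\si)) \cdot \phi \;=\; L(\si)\phi,
\end{equation*}
which is the claim.

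The routine parts are the chain-rule computation and the existence of the integral curve on $\Si_0$. The one step that deserves care is the passage from the invariance \eqref{2.1} to the pointwise identity $E'(f(\si)) = 0$: one must check that $T(\si)$ preserves the $X$-inner product well enough for the $X$-gradient to be equivariant (or, alternatively, differentiate $t \mapsto E(T(\si(t)) u_0)$ directly and use that this function is constant, which produces $\inn{E'(u_0)}{\text{tangent}}_X = 0$ at $t=0$ and then, by replaying the argument at an arbitrary base point $\si$, the full vanishing of $E'$ on $\Mc_0$). This is the only place where the symmetry hypothesis is genuinely used; once it is in hand, the conclusion follows from a single application of the chain rule.
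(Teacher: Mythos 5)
Your proof is correct and follows essentially the same route as the paper: establish that $E'(f(\si))\equiv 0$ on a neighbourhood of the identity in $\Si$ (the paper's \eqref{2.9}, cited there directly from \eqref{2.1}), then differentiate along a curve through $\si$ with tangent $a^i\xi_i(\si)$ and apply the chain rule. Your extra care in deriving the equivariance $E'(T(\si)u_0)=0$ from the invariance of $E$ is a welcome elaboration of a step the paper leaves implicit, but it is not a different argument.
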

\begin{proof}
	Write $\phi=a^idf(\si)\xi_i(\si)$ as in \eqref{szm}.
	Take a path $\si(s)\in\Si,\,\abs{s}\ll1$ with $\si(0)=\si,\,\di_s \si(0)= 
	a^i\xi_i(\si)$ in terms of the basis \eqref{xi}.
	By \eqref{2.1}, we have 
	\begin{equation}\label{2.9}
		E'(f(\si(s)))\equiv 0.
	\end{equation}
	Differentiating this w.r.t. $s$ and using the chain rule, we find 
	$$0= \di_s E'(f(\si(s)))= E''(f(\si(s)))df(\si(s))\di_s\si(s).$$
	By the initial condition for $\si(s)$, evaluating this at $s=0$ yields $L(\si)\phi=0$. 
\end{proof}

\begin{remark}
	We note a small caveat here regarding the \defnref{defn1}. It is not necessary for the symmetry modes to be in $X'$, which is usually taken to be the domain of $L(\si)$. For example, the Ginzburg-Landau vortices $$u^{(n)}:\Rb^2\to \Cb,n\in\Zb $$
	are solution to the semilinear elliptic equation
	$$-\Lap u +(\abs{u}^2-1)u=0,$$
	with $\deg u^{(n)}=n$. This equation is naturally posed on $L^2=L^2(\Rb^2,\Cb)$, and has translation symmetry, which generates the symmetry zero modes
	$$\di_{x^j}u^{(n)},\quad j=1,2$$
	as in \defnref{defn1}. Yet, for every $n\ne0$, these zero modes are not in the natural configuration space $L^2$ (though they are in $L^p$ for every $p>2$).
	
\end{remark}


As we show in \secref{sec:3}, symmetry zero modes can be easily eliminated by a first order correction, known as the \textit{modulation equation}. This method was first introduced in \cites{MR1071238,MR1170476} to study a class of nonlinear Sch\"odinger equations. It effectively eliminates the influence of $\xi_i$ from the dynamics.

\subsection{Main assumptions}\label{sec:2.2}

The main assumptions in this paper are concerned with the linearized operator at 
\begin{equation}
	\label{2.11}
	L(\si)=E''(T(\si)u_0):X'\subset X\to X,
\end{equation}
defined in \eqref{2.7} above. 

\textit{Main assumptions}. Fix $\si\in\Si_0$ with $\Si_0\subset \Si$ given by a neighbourhood around $1_\Si$, see \eqref{M0}.
\begin{itemize}
	\item[(C1)] $L(\si)$ is self-adjoint, and there exists $c_1>0$ independent of $\si$ s.th. \eqref{2.11} is uniformly bounded as \begin{equation}\label{C1}
		\norm{L(\si)}_{X'\to X}\le c_1.\tag{C1}
	\end{equation}

	\item[(C2)]  $0$ is an embedded eigenvalue of $L(\si)$, and the associated eigenvectors consist solely of the symmetry zero modes \eqref{szm}. The discrete spectrum $\si_{\rm d}(L(\si))$ consists of isolated eigenvalues 
	\begin{equation}
		\label{C2}
		E_1(\si)\le E_2(\si)\le \ldots \le E_N(\si)\le E<0.\tag{C2}
	\end{equation}
	for some $N\ge0$ and $E<0$ independent of $\si$. The multiplicity of $E_p(\si),\,0\le p\le N$ is finite and independent of $\si$.

	We denote by $m_p$ the multiplicity of $E_p(\si)$, so that  $1\le m_p<\infty$. 
	The eigenvectors associated to $E_j(\si)$ are denoted by \begin{equation}\label{usm}
		\phi_{j_p}(\si),\quad 1\le j_p\le m_p.
	\end{equation}

	\item[(C2)] There holds the propagator estimate
\begin{equation}
	\label{C3}
	\norm{e^{-tL(\si)}P_S(\si)}_{X\to X'} \le c_2\br{t}^{-\al} \quad (t\ge0 )\tag{C3},
\end{equation}
 for some $\al\ge 1,\, c_2>0$ independent of $\si,\,w,\,t$. Here $P_S(\si)$ denotes the orthogonal projection onto the stable range
\begin{equation}
	\label{2.14}
	\Sc(\si):=\Set{w\in X: \inn{w}{\phi_{j_p}(\si)}_X=0\text{ for every }1\le j_p\le m_p,\; 0\le p \le N}
\end{equation}
	\item[(C4)] 
	There exists $0< \delta_0\ll1$ s.th. the for every $\si,\si'\in\Si$ with $d(\si,\si')\le\delta_0$, there holds
	\begin{equation}
		\label{C4}
		\norm{L(\si)-L(\si')}_{X'\to X}\le\delta_0d(\si,\si').\tag{C4}
	\end{equation}
\end{itemize}

A number of remarks are in order to justify these conditions.
\begin{remark}
	Condition (C1) is customary for equation arising from physics. For example, in applications to classical field theory, $L(\si)$ is usually a Schr\"odinger operator of the form $L(\si)=-\Lap+V(\si),$
	where $V(\si):X\to X$ is a multiplication operator, arising from the linearization process. 
\end{remark}
\begin{remark}
	Condition \eqref{C2} is customary for spectral analysis of Schr\"odinger operators, e.g. \cites{MR1639709,MR1639713}. By \cite{MR2006008}*{thm(D)}, if $E$ is $C^3$ on $U$, then \eqref{C2} holds for all $\si\in\Si_0$. 
\end{remark}
\begin{remark}
	Condition \eqref{C3} compensates the presence of unstable modes of $L(\si)$. For fixed $\si$ and all large $t$, \eqref{C3} holds, for example, if $0\in\si_{\rm ess}$ is not a resonance, in the sense that $L(\si)P_S(\si)\ge c>0$. For a discussion of the latter spectral gap condition, see \cite{EffDyn}*{Sect. 2.2}.
\end{remark}
\begin{remark}
	Condition (C4) can be stated independent of the linearized operator $L(\si)$. Indeed, the point here is to require Fr\'echet derivatives of $E$ to vanish at a critical point $u_*$ at least up  to \textit{the second order}. This holds, for example, if $E$ is analytic at the critical point $u_*$.  
\end{remark}
\begin{remark}\label{rem4}
	Condition \eqref{C4} should be understood as an assumption on the nonlinearity. Indeed, for $\si\in\Si,\,w\in X'$ define the nonlinearity 
	\begin{equation}
		\label{2.15}
		N(\si,w):=E'(f(\si)+w)-L(a)w.
	\end{equation}
If $E$ is $C^2$ on $U$, then $N(\si,w)$ is continuous on $\Si_0$ times a small ball in $X'$, and satisfies $\norm{N(\si,w)}_X=o(\norm{w}_{X'})$. In principle, one would like to upgrade this remainder estimate to a quadratic one of the form $\norm{N(\si,w)}_X\ls\norm{w}_{X'}^2$, where the implicit constant is independent of $\si$. However, in application, this is sometimes difficult when one uses non-standard norms with strong decay properties. For example, in \cite{MR3374960}, the nonlinearity fails to be $O(\norm{w}_{X'}^{1+\eps})$ for any $\eps>0$ due to the use of Gaussian weighted Sobolev norm. See Sect. 4 of that paper. 
\end{remark}
\begin{remark}
	In the sequel, we will mostly use the following consequence of \eqref{C4}:   There exists $0<\delta=\delta(c_f,\delta_0)\ll1$ (see \eqref{C4} and \eqref{f2}) s.th. for every $\si,\,\si'\in\Si_0$ and $w,w'\in X'$ with $\norm{w}_{X'}+\norm{w}_{X'}\le\delta$, 
there holds
	\begin{equation}
		\label{2.17}
		\norm{N(\si,w)-N(\si',w')}_X\le \delta(d(\si,\si')+\norm{w-w'}_{X'}).
	\end{equation}
\end{remark}

\subsection{Main result}\label{sec:2.3}

In the remaining of this paper, we fix some $\si_0\in\Si_0$. In application, this choice amounts to a stationary symmetry frame for \eqref{1}. For $r>0$ we write $$B_r(\eta):=\Set{\eta'\in X':\norm{\eta-\eta'}_{X'}<r},\quad B_r\equiv B_r(0).$$ 

The main result of this paper is the following statement about the key map $\Phi$ defined in \eqref{Phi}.
\begin{theorem}
	\label{thm1}
	Suppose (f1)-(f2), (h1)-(h2), and (C1)-(C4) hold. Then there exist $0<\delta\ll1$, $c_0>0$ independent of $\delta$, together with a map 
	\begin{equation}
		\label{2.18}
		\Phi:\Sc(\si_0)\cap B_{c_0\delta}\to X'\quad \text{defined in \eqref{Phi} below, }
	\end{equation}
	s.th. for every $\eta,\,\eta'$ in the domain of $\Phi$,
	\begin{align}
	\label{2.19}
	\norm{\Phi(\eta)}_{X'}=&o(\norm{\eta}_{X'})\quad(\delta\to0),\\
	\label{2.20}
	\norm{\Phi(\eta)-\Phi(\eta')}_{X'}=&O(\delta\norm{\eta-\eta'}_{X'})\quad (\delta\to0),
\end{align}
and the following holds:
\begin{enumerate}
	\item (Global existence) For every $\eta\in\Sc(\si_0)$ with $\norm{\eta_0}_{X'}<c_0\delta$, there exists a unique global solution $$u\in C^1 (\Rb_{\ge0},\Si\times X)\cap Lip (\Rb_{\ge0},\Si\times X')$$
	to the gradient flow \eqref{1}, with initial configuration
\begin{equation}\label{ic}
		u(0)=f(\si_0)+\eta+\Phi(\eta).
\end{equation}
	\item (Effective dynamics) For all $t\ge0$, this solution
	$u$ is of the form
	\begin{equation}
		\label{2.21}
		u=f(\si)+w,
	\end{equation}
	where $f$ is defined in \eqref{2.6}, the path $w=w(t),\,t\ge0$ satisfies
	\begin{equation}
		\label{2.22}
		\norm{w(t)}_{X'}\le \delta \br{t}^{-\al},
	\end{equation}
and $\si(t)\in\Si,\,t\ge0$ evolves according to the effective dynamics
\begin{equation}
	\label{2.23}
	\dot\si=h(a(\si,w),\si),\quad \si(0)=\si_0.
\end{equation}
Here, $h$ is the trivilization given in \eqref{h}, the vector $a=a(\si,w)\in\Rb^{\dim\Si}$ is the unique solution to the equation $G(a,\si,w)=0$, with
$G$ defined in \eqref{3.12}. Moreover, for all $t\ge0$, there holds
\begin{equation}
	\label{2.24}
	\norm{\dot\si(t)}_Y=O(\delta\br{t})\quad (\delta\to0).
\end{equation}
\end{enumerate}
\end{theorem}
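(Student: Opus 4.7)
The strategy is a Lyapunov–Perron construction. First, following \secref{sec:3}, a modulation ansatz $u=f(\si)+w$ with $w$ kept orthogonal to the symmetry zero modes uses \eqref{h1} and (f1)–(f2) to determine $\dot\si=h(a(\si,w),\si)$ via the implicit function theorem applied to the algebraic constraint $G(a,\si,w)=0$ of \eqref{3.12}, reducing \eqref{1} to the coupled system
\begin{equation*}
\di_t w=-L(\si)w-N(\si,w)-df(\si)\,h(a(\si,w),\si),\qquad \dot\si=h(a(\si,w),\si),
\end{equation*}
where $N$ is the nonlinearity in \eqref{2.15}. This eliminates the zero directions of $L(\si)$; what remains is to stabilize the finitely many negative eigenmodes.

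\textbf{Freezing and splitting.} Fix the reference operator $L(\si_0)$ and move the drift into the source term:
\begin{equation*}
\di_t w=-L(\si_0)w+F(\si,w),\quad F(\si,w):=\bigl(L(\si_0)-L(\si)\bigr)w-N(\si,w)-df(\si)\,h(a(\si,w),\si).
\end{equation*}
By \eqref{C2} the orthogonal complement of the zero modes at $\si_0$ is the direct sum of $\Sc(\si_0)$ and the finite-dimensional unstable eigenspace $\mathcal{V}:=\spn\{\phi_{j_p}(\si_0)\}$; let $P_V$ denote the orthogonal projection onto $\mathcal{V}$. For $\eta\in\Sc(\si_0)\cap B_{c_0\delta}$ I would seek $w$ via the Lyapunov–Perron split Duhamel formula
\begin{align*}
P_S(\si_0)w(t)&=e^{-tL(\si_0)}\eta+\int_0^t e^{-(t-s)L(\si_0)}P_S(\si_0)F(\si(s),w(s))\,ds,\\
P_V w(t)&=-\int_t^\infty e^{-(t-s)L(\si_0)}P_V F(\si(s),w(s))\,ds,
\end{align*}
coupled with $\dot\si=h(a(\si,w),\si)$, $\si(0)=\si_0$. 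The backward integral converges because the spectral bound $E_p(\si_0)\le E<0$ in \eqref{C2} gives $\norm{e^{(s-t)L(\si_0)}P_V}\ls e^{E(s-t)}$ for $s\ge t$. Evaluating the second line at $t=0$ isolates
\begin{equation*}
\Phi(\eta):=-\int_0^\infty e^{sL(\si_0)}P_V F(\si(s),w(s))\,ds,
\end{equation*}
which is the fixed-point reformulation referenced in \propref{prop4.1}.

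\textbf{Contraction and conclusions.} On the Banach space $\Xc_\delta:=\{(\si,w):\si\in C^1(\Rb_{\ge 0},\Si_0),\,\sup_{t\ge 0}\br{t}^\al\norm{w(t)}_{X'}\le\delta\}$, the forward integral under the weight $\br{t}^{-\al}$ is controlled by \eqref{C3}, the backward integral by the negative spectral gap, and one has $\norm{F(\si,w)-F(\si',w')}_X\ls\delta(d(\si,\si')+\norm{w-w'}_{X'})$ as a consequence of \eqref{2.17}, (f2), (h2), and \eqref{C4}. For $\delta$ sufficiently small the induced map is a contraction on $\Xc_\delta$; its unique fixed point supplies global existence, the form \eqref{2.21} with the decay \eqref{2.22}, the effective ODE \eqref{2.23}, and the bound \eqref{2.24}. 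The quantitative properties \eqref{2.19}–\eqref{2.20} of $\Phi$ follow by substituting the fixed point back into its defining integral and exploiting the $o(\norm{w}_{X'})$ behavior of $N$ together with the Lipschitz structure of $F$.

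\textbf{Main obstacle.} The delicate point is the mismatch between the natural $\si$-dependent spectral projections $P_S(\si(t))$, $P_V(\si(t))$ and the frozen Duhamel semigroup $e^{-tL(\si_0)}$ one actually integrates against. The term $(L(\si_0)-L(\si(t)))w$ in $F$ must be absorbed without spoiling the $\br{t}^{-\al}$ decay on the forward integral nor the exponential convergence of the backward integral; this is precisely what the quantitative hypothesis \eqref{C4} is tailored to ensure, and it is what forces the $\si$- and $w$-equations to be solved as a single coupled fixed point on $\Xc_\delta$ rather than sequentially.
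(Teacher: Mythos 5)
Your proposal is correct in outline and follows the same overall architecture as the paper: modulation to kill the symmetry zero modes (the system you write down is exactly \eqref{4.6} with $a$ determined by $G(a,\si,w)=0$ as in \propref{prop3.3}), followed by a weighted-in-time fixed point in which the finitely many unstable directions are stabilized by a backward-in-time Duhamel integral; your formula for $\Phi(\eta)$ is precisely the $t=0$ evaluation that the paper encodes as the choice \eqref{5.11} of the coefficients $\beta^{j_p}$, and your space $\Xc_\delta$ plays the role of $\Ac_\delta$. The one genuine difference is where you put the linear part: you freeze $L(\si_0)$ once and for all and absorb $(L(\si_0)-L(\si(t)))w$ into the source term $F$, which is the classical Lyapunov--Perron formulation, whereas the paper's iteration map $\Psi$ propagates each iterate with the time-dependent operator $L(\si^{(0)}(t))$ of the previous iterate and projects onto its time-dependent eigenspaces (\eqref{5.2}--\eqref{5.3}, \eqref{5.21}--\eqref{5.27}). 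Your version buys fixed spectral projections and a genuine semigroup $e^{-tL(\si_0)}$, at the cost of having to control the commutator-type term $(L(\si_0)-L(\si))w$ via \eqref{C4} --- exactly the obstacle you flag; the paper's version avoids that term in the linear propagation but must instead justify that the time-dependent projections decouple the system and that \eqref{C3} applies along the moving frame. Two small points to watch: your contraction norm should also control $\dot\si$ (as in \eqref{norm}) so that the Lipschitz dependence of $F$ on $d(\si,\si')$ closes, and for the borderline exponent $\al=1$ the bound $d(\si_0,\si(t))\ls 1$ needed to make \eqref{C4} effective requires an extra argument, since $\int_0^t\br{t'}^{-1}\,dt'$ is unbounded.
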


\begin{definition}[stable manifold]\label{sm}
	With notations as in \thmref{thm1} above, the set
	\begin{equation}
		\label{Mc}
		\Mc=\Mc(\si_0,\delta):=\Set{f(\si_0)+\eta+\Phi(\eta):\eta\in\Sc(\si_0),\,\norm{\eta}_X'<c_0\delta}
	\end{equation}
	is called the stable manifold for \eqref{1},
\end{definition}
This set $\Mc$ forms a manifold. Indeed, by Rademacher's theorem \cite{MR1373829}, Lipschitz estimate \eqref{2.20} ensures that $\Phi$ is differentiable a.e. with respect to a suitable Radon measure on $X'$, and the Fr\'echet derivative $d\Phi(\eta):X'\to X'$ satisfies $\norm{d\Phi(\eta)}_{X'\to X'}\ls\delta$ for fixed small $\delta$. This, together with definition \eqref{Mc} above, implies that $\Mc$ is an a.e. immersed manifold in $X'$.

\begin{remark}\label{rem:gen}
	By \eqref{2.19}, the set $\Mc$ is a small graph over the  finite-codimensional affine space $f(\si_0)+\Sc(\si_0)$. By the preceding remark, this graph can be viewed as a finite-codimensional manifold inside the small ball $B_\delta(f(\si_0))\subset U$. This justifies the generality of \thmref{thm1}, as Items 1-2 above only have to do with initial configuration from $\Mc$.
\end{remark}

We understand \thmref{thm1} in two ways. First, in physical terms, this can be viewed as an adiabatic theorem for \eqref{1} as follows. By \eqref{2.19} and the assumption $0<\delta\ll1$ (i.e. the adiabatic condition), $\Mc$ consists of low energy solution in the configuration space for \eqref{1} (recall that $E'(f(\si_0))=0$ by \eqref{2.1}). Then  assertions \eqref{2.21}-\eqref{2.23} show that the full dynamics \eqref{1} for $u$, which a priori is an infinite-dimensional dynamical system on $X$, reduces to an effective dynamics for $\si$, which is a system of nonlinear ODEs posed on $\Rb^{\dim \Si}$. 

By construction, the manifold $\Mc_0\equiv f(\Si_0)\subset U$ of static solution is an ensemble of equilibrium states (see \eqref{M0}), and therefore \eqref{2.23} is the adiabatic approximation for \eqref{1}. The validity of this adiabatic dynamics is guaranteed, since \eqref{2.22} ensures that a generic initial perturbation (see \remref{rem:gen}) dissipates to $0$ as $t\to\infty$. 

Secondly, in the theory of invariant manifold, the set $\Mc$ can be viewed as a stable manifold for \eqref{1}. Indeed, the dissipative estimate \eqref{2.22} implies that the trivial solutions $u_0$ (or equivalently, $f(\si_0)$ for any fixed $\si_0\in\Si_0$) is asymptotically stable with initial perturbations of the form \eqref{ic}.

The remaining sections are devoted to the proof of \thmref{thm1}.

\section{First order correction}\label{sec:3}

We would like to reduce the full dynamics \eqref{1} to an  effective one on the manifold of static solution $f(\Si_0$). The main obstacle is the presence of the zero modes \eqref{szm}, and the unstable modes \eqref{usm}. In this section, we use the method of modulation equation to eliminate the symmetry zero modes. The unstable modes are handled in Sections \ref{sec:4}-\ref{sec:5}. At least heuristically, once we remove all the zero-unstable modes,  the desired dissipative estimate \eqref{2.22} would follow from   the propagator estimate \eqref{C2}.

The main result of this section is the following:
\begin{theorem}
	\label{thm3.1}
	Let $u(t)\in U,\,t\ge0$ be a solution to \eqref{1}.
	Suppose $u=u(t)$ is of the form
	\begin{equation}
		\label{3.1}
		u= f(\si)+w,
	\end{equation}
	where $\si=\si(t)$ is a path in $\Si_0$, the vector $f(\si)\in U$  (see \eqref{2.6}-\eqref{2.7}), and  $w:=u-f(\si)$  satisfies $\norm{w(t)}_{X'}\ll1$. 
	
	Suppose
	$w(0)$ is orthogonal to all symmetry zero modes of $L(\si(0))$, i.e.
	\begin{equation}
		\label{3.2}
		\inn{w(0)}{df(\si(0))\xi_i}=0\quad (1\le i\le\dim\Si).
	\end{equation}

	Then $w(t),t>0$ satisfies 
	\begin{equation}
		\label{3.3}
		\inn{w(t)}{df(\si(t))\xi_i(\si(t))}=0\quad (1\le i\le\dim\Si)
	\end{equation}
	if and only if $$\dot \si(t)=h(a(t),\si(t))=a^j(t)\xi_j(\si(t)),$$
	were $h$ is the trivialization given in\eqref{h}, and each $a^i=a^i(t)$ solves the algebraic equation
	\begin{align}
		\label{3.4}
		  a^i=&\rho_{ij}(\si, w)a^j+\tau_i(\si,w)\\
		  \label{3.4'}
		  \rho_{ij}(\si,w):=&\inn{w}{(d^2f(\si)\xi_j(\si))\xi_i(\si)}+\inn{w}{(d\xi_i(\si))\xi_j(\si)}\\
		  \label{3.4''}
		   \tau_i(\si,w):=&- \inn{N(\si,w)}{df(\si)\xi_i(\si)}.
	\end{align}

\end{theorem}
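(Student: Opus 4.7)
The plan is to differentiate the orthogonality condition \eqref{3.3} in time and convert the resulting constraint on $\dot\si$ into the algebraic system \eqref{3.4} via the trivialization $h$. Since $\dot\si(t)\in T_{\si(t)}\Si$, I can always write $\dot\si=a^j\xi_j(\si)$ using the basis \eqref{xi} for some time-dependent vector $a=(a^j)\in\Rb^{\dim\Si}$, so the real content of the theorem lies in the \emph{values} of the $a^j$. Plugging the ansatz $u=f(\si)+w$ into \eqref{1}, using $E'(f(\si))=0$, and invoking the definition \eqref{2.15} of $N$, I obtain the equation of motion for $w$:
\begin{equation*}
\di_t w = -L(\si)w - N(\si,w) - a^j\,df(\si)\xi_j(\si).
\end{equation*}

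Next I would set $F_i(t):=\inn{w(t)}{df(\si(t))\xi_i(\si(t))}_X$ and differentiate using the product rule. In the piece coming from $\di_t w$, the key observation is that $L(\si)df(\si)\xi_i(\si)=0$ by \lemref{lem2.1}; together with the self-adjointness of $L(\si)$ from (C1), this kills the $L(\si)w$ contribution outright. The orthonormality \eqref{h1} collapses the modulation term into $-a^j\delta_{ij}=-a^i$, and the nonlinear piece produces exactly $\tau_i(\si,w)$ as in \eqref{3.4''}. The remaining piece, coming from $\di_t[df(\si)\xi_i(\si)]$, is a pure chain rule computation: $\di_t$ lands either on the basepoint argument of $df$ (giving $d^2f(\si)(\dot\si,\xi_i(\si))$) or on $\xi_i(\si)$ itself (giving $df(\si)\,d\xi_i(\si)\dot\si$), and after substituting $\dot\si=a^j\xi_j(\si)$ and pairing with $w$ one obtains precisely $a^j\rho_{ij}(\si,w)$ from \eqref{3.4'}. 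Collecting these contributions yields
\begin{equation*}
\dot F_i(t) = -a^i + \rho_{ij}(\si,w)a^j + \tau_i(\si,w).
\end{equation*}

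Since $F_i(0)=0$ by the hypothesis \eqref{3.2}, the preservation of orthogonality \eqref{3.3} for all $t>0$ is equivalent to $\dot F_i\equiv0$, which is precisely \eqref{3.4}. This gives the claimed "if and only if": given the algebraic identity, $F_i$ stays zero; conversely, if \eqref{3.3} holds for all $t$, then $\dot F_i$ vanishes identically and \eqref{3.4} is forced. The main bookkeeping point I anticipate, which is also the reason the statement is non-vacuous, is that \eqref{3.4} must be \emph{uniquely solvable} for $a$: the uniform bounds (f2) and (h2) give $\abs{\rho_{ij}(\si,w)}\ls\norm{w}_{X'}$ on $\Si_0$, so the matrix $\id-\rho(\si,w)$ is invertible whenever $\norm{w(t)}_{X'}\ll 1$, and the resulting $a=(\id-\rho(\si,w))^{-1}\tau(\si,w)$ then feeds back through $\dot\si=h(a,\si)$. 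Everything else is direct consequence of self-adjointness, \lemref{lem2.1}, and the orthonormality \eqref{h1}, so I do not expect any genuinely hard analytical step in the proof.
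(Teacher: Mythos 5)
Your proposal is correct and follows essentially the same route as the paper: differentiate the orthogonality pairing $F_i(t)=\inn{w}{df(\si)\xi_i(\si)}$, use \lemref{lem2.1} plus self-adjointness of $L(\si)$ to annihilate the linear term, \eqref{h1} to reduce the modulation term to $-a^i$, and the chain rule on $\di_t(df(\si)\xi_i(\si))$ to produce $\rho_{ij}a^j$, so that $\dot F_i\equiv 0$ together with $F_i(0)=0$ is equivalent to \eqref{3.4}. Your closing remark on the invertibility of $\id-\rho(\si,w)$ is not needed for this theorem but correctly anticipates \propref{prop3.3}.
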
 
\begin{proof}
	\eqref{3.3} holds if and only if it holds at $t=0$, i.e. \eqref{3.2} is satisfied, and 
	\begin{equation}
		\label{3.5}
		\inn{\dot w}{df(\si)\xi_i(\si)}=-\inn{w}{\di_t (df(\si)\xi_i(\si))}.
	\end{equation}
	We now calculate both sides of \eqref{3.5} explicitly, and show that it is equivalent to \eqref{3.5}.
	
	Differentiating \eqref{3.1}, we find 
	\begin{equation}
		\label{3.6}
		\dot u=df(\si)\dot \si + \dot w=a^idf(\si)\xi_j(\si)+\dot w,
	\end{equation}
	where we write the velocity $\dot\si=h(a,\si)=a^j\xi_j(\si)$ in terms of the basis \eqref{xi}. Plugging this back to the l.h.s. of \eqref{3.5}, we find
	\begin{equation}
		\label{3.7}
			\inn{\dot w}{df(\si)\xi_i(\si)}=-\inn{E'(f(\si)+w)}{df(\si)\xi_i(\si)}-a^j\inn{df(\si)\xi_j(\si)}{df(\si)\xi_i(\si)}
	\end{equation}
	Since $f(\si)\in U$ and $\norm{w}_{X'}\ll1$, the following expansion is valid: $$E'(f(\si)+w)=E'(f(\si))+L(\si)w+N(\si,w).$$
	The first term in the r.h.s. vanishes by \eqref{2.1} and the definition of $f$, see \eqref{2.9}. By \lemref{lem2.1} and the self-adjointness of $L(\si)$ from condition (C1), the inner product $$\inn{L(\si)w}{df(\si)\xi_i}=\inn{w}{L(\si)df(\si)\xi_i}=0.$$
	Hence, the first term in the r.h.s. of \eqref{3.7} simplifies to $-\inn{N(\si,w)}{df(\si)\xi_i(\si)}$. This, together with \eqref{h1}, implies that 
	\begin{equation}
		\label{3.7'}
		\inn{\dot w}{df(\si)\xi_i(\si)}=-a_i-\inn{N(\si,w)}{df(\si)\xi_i(\si)}.
	\end{equation}

	For the r.h.s. of \eqref{3.5}, we calculate 
\begin{equation}\label{3.7''}
		\begin{aligned}
		&\di_t(df(\si)\xi(\si))\\
		=&\di_t(df(\si))\xi_i(\si)+df(\si)\di_t\xi_i(\si)\\
	=&a^j((d^2f(\si)\xi_j)\xi_i+d\xi_i(\si)\xi_j)\quad (\si=a^j\xi_j).
	\end{aligned}
\end{equation}
	Here $d^2f(\si):T_\si\Si\to L(T_\si\Si, X')$ and $d\xi_i(\si):T_\si \Si, X'$ are respectively the Fr\'echet derivatives of $df$ and $\xi_i=h(\cdot, E_i)$ evaluated at $\si$, see \eqref{2.6} and \eqref{xi}. Plugging \eqref{3.7'}-\eqref{3.7''} back to \eqref{3.5} gives \eqref{3.4}.

%
%
\end{proof}

\begin{corollary}
	\label{cor3.2}
	Suppose $u=f(\si)+w$ is a solution to \eqref{1} as in \thmref{thm3.1}, and satisfies \eqref{3.2}-\eqref{3.3}. Then $\dot\si=h(a,\si)$ with $\abs{a}=o (\norm{w}_{X'})$.
\end{corollary}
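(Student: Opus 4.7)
The plan is to read off the desired bound directly from the algebraic system \eqref{3.4} in \thmref{thm3.1}. Rewriting that system in vector form gives
\begin{equation*}
	(I-\rho(\si,w))\,a=\tau(\si,w),
\end{equation*}
where $\rho(\si,w)\in\Rb^{\dim\Si\times\dim\Si}$ has entries $\rho_{ij}$ and $\tau(\si,w)\in\Rb^{\dim\Si}$ has entries $\tau_i$ from \eqref{3.4'}-\eqref{3.4''}. Thus it suffices to show $\|\rho(\si,w)\|=O(\norm{w}_{X'})$ and $|\tau(\si,w)|=o(\norm{w}_{X'})$, uniformly in $\si\in\Si_0$.

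First, for $\rho$, I would estimate each entry by Cauchy--Schwarz and the uniform bounds \eqref{f2} and \eqref{h2}: namely
\begin{equation*}
\abs{\rho_{ij}(\si,w)}\le \norm{w}_X\bigl(\norm{d^2f(\si)}_{Y\to L(Y,X')}\norm{\xi_j}_Y\norm{\xi_i}_Y+\norm{df(\si)}_{Y\to X'}\norm{d\xi_i(\si)}_{Y\to Y}\norm{\xi_j}_Y\bigr)\ls c_f c_h^2\,\norm{w}_{X'},
\end{equation*}
where I used that $\norm{w}_X\le\norm{w}_{X'}$. Consequently, for all $w$ with $\norm{w}_{X'}$ sufficiently small (depending only on $c_f,c_h$), the matrix $I-\rho(\si,w)$ is invertible and its inverse is bounded by $2$ uniformly in $\si\in\Si_0$ and $w$.

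Second, for $\tau$, I would invoke the nonlinearity estimate $\norm{N(\si,w)}_X=o(\norm{w}_{X'})$, which is valid by the $C^2$ assumption on $E$ (see the remark following \eqref{C4}, or take $\si'=\si$, $w'=0$ in \eqref{2.17}). Combined with $\norm{df(\si)\xi_i(\si)}_{X'}\le c_fc_h$ from \eqref{f2}-\eqref{h2}, Cauchy--Schwarz gives
\begin{equation*}
\abs{\tau_i(\si,w)}\le\norm{N(\si,w)}_X\norm{df(\si)\xi_i(\si)}_X\le c_fc_h\,\norm{N(\si,w)}_X=o(\norm{w}_{X'})
\end{equation*}
as $\norm{w}_{X'}\to 0$, uniformly in $\si\in\Si_0$.

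Combining the two estimates, $a=(I-\rho(\si,w))^{-1}\tau(\si,w)$ satisfies $|a|\le 2|\tau(\si,w)|=o(\norm{w}_{X'})$, as claimed. The only potentially delicate point is the uniformity of the $o(\cdot)$ modulus in $\si$; this however is guaranteed by the uniform-in-$\si$ character of \eqref{2.17} and of the bounds in \eqref{f2}, \eqref{h2}, so no extra work beyond invoking those assumptions is required.
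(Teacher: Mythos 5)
Your proposal is correct and follows essentially the same route as the paper's proof: the same entrywise bounds $\abs{\rho_{ij}}\ls\norm{w}_X$ and $\abs{\tau_i}=o(\norm{w}_{X'})$ via Cauchy--Schwarz, \eqref{f2}, \eqref{h2}, and the remainder estimate on $N$. Your inversion of $I-\rho$ by a Neumann-series bound is just a repackaging of the paper's absorption step $\abs{a^i}\le c\norm{w}_X\abs{a}+o(\norm{w}_{X'})$ for $\norm{w}_{X'}$ small.
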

\begin{proof}
	By \thmref{thm3.1}, the path $\dot \si$ satisfies 
\begin{equation}\label{3.8'}
		\dot\si=h(\si, a), \quad G(a,\si,w)=0,
\end{equation}
	where
		\begin{equation}
		\label{3.12}
		\fullfunction{G}{\Rb^{\dim \Si}\times \Si\times X'}{\Rb^{\dim \Si}}{(a,\si,w)}{a^i-\rho_{ij}(\si, w)a^j-\tau_i(\si,w)},
	\end{equation}
c.f. \eqref{3.4}-\eqref{3.4''}. We use \eqref{3.8'}-\eqref{3.12} to establish the estimate for $a$. 

	Fix $1\le i,j \le \dim \Si$. We first bound $\rho_{ij}$ from \eqref{3.4'}. 
	 The first term in $\rho_{ij}$ can be bounded by
\begin{equation}\label{3.8}
		\abs{\inn{w}{(d^2f(\si)\xi_j(\si))\xi_i(\si)}}\le  c_fc_h\dim \Si\norm{\xi}_Y \norm{w}_X,
\end{equation}
	and the second term by 
\begin{equation}\label{3.9}
		\abs{\inn{w}{(d\xi_i(\si))\xi_j(\si)}}\le c_h \dim\Si\norm{w}_X.
\end{equation}
	These follow from Cauchy-Schwartz and the assumptions \eqref{f2}, \eqref{h2}. Combining these we find $\abs{\rho_{ij}(\si,w)}\ls \norm{w}_X$. 
	
	Similarly, we find that $\tau_i$ is of sub-leading order, as
\begin{equation}\label{3.10}
		\abs{\inn{N(\si,w)}{df(\si)\xi_i(\si)}}\le c_f c_h \norm{N(\si,w)}_X=o(\norm{w}_{X'}).
\end{equation}
	The last bound follow from the regularity of $E$ on $U$. 
	
	Plugging \eqref{3.8}-\eqref{3.10} into \eqref{3.8'}, and using the definition of $G$ in \eqref{3.12}, we find that
\begin{equation}\label{3.11}
		\abs{a^i}\le c \norm{w}_X\abs{a}+o(\norm{w}_{X'})
\end{equation}
	for some absolute constant $c>0$. If $\norm{w}_{X'}\le (2c\dim \Si)^{-1}$, then \eqref{3.10} implies $\tfrac{1}{2}\abs{a}\le o(\norm{w}_{X'})$. 
	
\end{proof}

\begin{proposition}
	\label{prop3.3}
	For every $(\si,w)\in\Si_0\times X'$ with $\norm{w}_{X'}\ll1$, there exists a unique solution $a=a(\si,w)\in\Rb^{\dim\Si}$ to the equation $G(a,\si,w)=0$, where $G$ is as in \eqref{3.12}

	Moreover, there exist constants $c,\,c'>0$ independent of $(\si,w),(\si',w')$ s.th.
	\begin{align}
		\label{a1}
		\abs{a(\si,w)}\le &c\norm{w}_{X'},\\
		\label{a2}
		\abs{a(\si,w)-a(\si',w')}\le& c' (d(\si,\si')+\norm{w-w'}_{X'}).
	\end{align}
\end{proposition}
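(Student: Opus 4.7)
The key observation is that, although $G(a,\sigma,w) = 0$ is nonlinear in $(\sigma,w)$, once rewritten via \eqref{3.4}--\eqref{3.4''} it is a \emph{linear} equation in $a$:
\begin{equation*}
(I - \rho(\sigma,w))\, a = \tau(\sigma,w),
\end{equation*}
where $\rho(\sigma,w)$ is the $\dim \Sigma \times \dim \Sigma$ matrix with entries $\rho_{ij}(\sigma,w)$ and $\tau(\sigma,w)$ is the vector with entries $\tau_i(\sigma,w)$. My plan is to solve this linear system by a Neumann series argument and then propagate Lipschitz estimates from $\rho,\,\tau$ to $a$.

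For existence, uniqueness, and \eqref{a1}, I invoke the computations \eqref{3.8}--\eqref{3.9} from the proof of \corref{cor3.2}, which give $\|\rho(\sigma,w)\|_{\mathrm{op}} \leq C \|w\|_{X'}$ uniformly in $\sigma$ using only \eqref{f2} and \eqref{h2}. So for $\|w\|_{X'}$ sufficiently small $I - \rho(\sigma,w)$ is invertible with $\|(I-\rho)^{-1}\|_{\mathrm{op}} \leq 2$, and the unique solution is $a(\sigma,w) = (I - \rho(\sigma,w))^{-1}\tau(\sigma,w)$. To establish $|\tau(\sigma,w)| \lesssim \|w\|_{X'}$ and hence \eqref{a1}, I use that $N(\sigma,0) = E'(f(\sigma)) = 0$ by \eqref{2.9}, so the Lipschitz bound \eqref{2.17} applied with $w'=0$ yields $\|N(\sigma,w)\|_X \lesssim \|w\|_{X'}$; pairing against $df(\sigma)\xi_i(\sigma)$ (whose $X$-norm is bounded by \eqref{f2}, \eqref{h2}) via Cauchy--Schwarz closes this estimate.

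For \eqref{a2}, I subtract the two defining equations to obtain
\begin{equation*}
a(\sigma,w) - a(\sigma',w') = (I - \rho(\sigma,w))^{-1}\Bigl[(\rho(\sigma,w) - \rho(\sigma',w'))\,a(\sigma',w') + \tau(\sigma,w) - \tau(\sigma',w')\Bigr],
\end{equation*}
and bound the bracket by $O(d(\sigma,\sigma') + \|w-w'\|_{X'})$. For the $\tau$-difference I split into a change in $N$ plus a change in $df(\sigma)\xi_i(\sigma)$: the first is controlled by \eqref{2.17}, the second by Lipschitz continuity of $\sigma \mapsto df(\sigma)\xi_i(\sigma)$ into $X'$ (a consequence of the $C^2$ bound on $f$ and the $C^1$ bound on $\xi_i$) together with $\|N(\sigma',w')\|_X = O(\|w'\|_{X'})$. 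For the $\rho$-difference, linearity in $w$ handles the $w$-direction immediately, while the $\sigma$-variation requires a little more care.

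The main obstacle I anticipate is precisely the Lipschitz continuity in $\sigma$ of the coefficient map $\sigma \mapsto (d^2 f(\sigma)\xi_j(\sigma))\xi_i(\sigma)$ entering $\rho_{ij}$, which is not literally encoded by \eqref{f2}. Fortunately, this term enters the bracket only multiplied by $a(\sigma',w')$, whose size is $O(\|w'\|_{X'}) \ll 1$ by \eqref{a1}, so the contribution is sub-leading and is absorbed into an absolute Lipschitz constant $c'$ once $\delta$ is taken small enough. Combining these bounds with $\|(I - \rho)^{-1}\|_{\mathrm{op}} \leq 2$ yields \eqref{a2}.
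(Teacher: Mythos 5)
Your proposal is correct in substance but follows a genuinely different route from the paper. You exploit the fact that $G(a,\si,w)=0$ is \emph{linear} in $a$ and solve $(I-\rho)a=\tau$ by a Neumann series, getting existence, uniqueness, and the bound $\abs{a}\le 2\abs{\tau}\ls\norm{w}_{X'}$ in one stroke; the paper instead notes the trivial solution $(0,0,0)$, computes the Jacobian $\di_{a^j}G_i\vert_{(0,0,0)}=\delta_{ij}$, and invokes the Inverse Function Theorem, then derives \eqref{a1} separately from \corref{cor3.2}. Your argument is more elementary and makes the uniformity in $\si$ and the admissible smallness threshold on $\norm{w}_{X'}$ completely explicit, whereas the IFT a priori only gives solvability near the base point; it also yields the resolvent identity you use for \eqref{a2}, which is cleaner than the paper's one-line appeal to \eqref{f2}, \eqref{h2}, \eqref{2.17}. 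One caveat: your claimed fix for the $\si$-Lipschitz continuity of $\si\mapsto(d^2f(\si)\xi_j(\si))\xi_i(\si)$ does not actually work as stated --- multiplying a merely continuous (non-Lipschitz) modulus by the small factor $\abs{a(\si',w')}$ makes the term small but not Lipschitz in $d(\si,\si')$. However, \eqref{f2} controls only the size of $d^2f$, not its modulus of continuity, so this gap is present in the paper's own proof as well (which silently assumes it follows from \eqref{f2} and \eqref{h2}); strictly one needs either $f\in C^{2,1}$ or to weaken \eqref{a2} accordingly, and you deserve credit for flagging the issue explicitly.
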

\begin{proof}
	By formula \eqref{3.4'}-\eqref{3.4''}, we find that equation \eqref{3.12} has trivial solution $(a,\si,w)=(0,0,0)$.
	As a part of the standing assumptions, $f$ is $C^2$ and  $\xi_i$ is $C^1$ on $\Si_0$, and $N(\si,w)$ is continuous from $X'\to X$ for $\abs{w}_{X'}\ll1$. Hence the map
	$G(a,\si,w)$ is $C^1$ in $a$ for fixed $(\si,w)\in\Si_0\times X'$ with $\norm{w}_{X'}\ll1$.  The Jacobian matrix of $G$ at the  trivial solution is given by
	$\di_{a^j}G_i\vert_{(0,0,0)}=\delta_{ij}$, since $\rho_{ij}(\si,w)\to 0$ for $w\to0$, as we have shown in the proof of \corref{cor3.2}. Combining these facts, the existence claim follows from Inverse Function Theorem.
	
	Estimate \eqref{a1} follows from \corref{cor3.2}. The Lipschitz estimate \eqref{a2} follows from the uniform estimate assumptions \eqref{f2}, \eqref{h2}, and the Lipschitz estimate assumption on the linearity, \eqref{2.17}.
\end{proof}

\section{Fixed point scheme}\label{sec:4}
In this section, we present a fixed point scheme to remove the destabilizing effect of the unstable modes \eqref{usm} of the linearized operator. This amounts to a second order correction in the sense of \eqref{6.2}-\eqref{6.3} below. As we show in \propref{prop4.1}, this fixed point scheme, together with the modulation from \secref{sec:3}, yields global existence as well as the dissipative estimate in the main result, \thmref{thm1}. 

Let 
\begin{equation}
	\label{4.1}
	\Xc:=C^1 (\Rb_{\ge0},\Si\times X)\cap Lip (\Rb_{\ge0},\Si\times X').
\end{equation}
Fix $0<\delta\ll1$. Let
\begin{equation}
	\label{A}
	\Ac_\delta=\Set{(\si,w)\in\Xc: \norm{w(t)}_{X'}\le \delta\br{t}^{-\al},\norm{\dot\si(t)}_Y\le c'(c_h+1) \delta\br{t}^{-\al}}. 
\end{equation}
Here $\al$ is the exponent in the propagator estimate (C2), $c'$ is the prefactor in \eqref{a2}, and $c_h$ is from \eqref{h2}.

The space $\Ac_\delta$ is non-empty, since the path $\si\equiv \si_0,\,w\equiv 0$ lies in it. Notice that this path corresponds to the trivial static solution to \eqref{1} given by $f(\si_0)$, see \eqref{2.6}. Now we show that $\Ac_\delta$ can be turned into a Banach space. 

\begin{lemma}\label{lem4.1}
	For every $\al\ge1$ and $c=c'(c_h+1)>0$, the function 
\begin{equation}\label{norm}
	\norm{(\si,w)}=\sup_{t\ge0}(\br{t}^\al\norm{w(t)}_{X'}+ c^{-1}t^{-1}\int_0^t\br{t'}^{\al} \norm{\dot\si(t')}_Y\,dt')
\end{equation}
	defines a norm on $\Xc$. Moreover, $(\Ac_\delta,\norm\cdot)$ is complete.

\end{lemma}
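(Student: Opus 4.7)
I would first pin down the norm: since $\norm{(\si, w)}$ depends on $(\si, w)$ only through $\dot\si$ and $w$, I interpret $\Xc$ as an affine space of paths with the initial value $\si(0) = \si_0$ fixed and linearize the $\si$-direction via the trivialization $h$ in \eqref{h}, so that $\dot\si(t) \in T_{\si(t)}\Si$ is identified with a vector in $\Rb^{\dim\Si}$ and linear combinations make sense. The norm axioms are then routine: finiteness on $\Ac_\delta$ follows directly from the decay estimates defining $\Ac_\delta$, which yield $\br{t}^\al\norm{w(t)}_{X'} \le \delta$ and $c^{-1}t^{-1}\int_0^t \br{t'}^\al\norm{\dot\si(t')}_Y\,dt' \le \delta$, hence $\norm{(\si, w)} \le 2\delta$; positive homogeneity and subadditivity are inherited from $\norm{\cdot}_{X'}$, $\norm{\cdot}_Y$, and linearity of the integral; and definiteness follows because $\br{t}^\al \ge 1$ forces $w \equiv 0$ pointwise, while vanishing of the averaged second term for all $t > 0$ forces $\dot\si \equiv 0$ a.e., and hence $\si \equiv \si_0$.

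For completeness of $\Ac_\delta$, given a Cauchy sequence $\{(\si_n, w_n)\}$ I would extract limits separately in each slot. In the $w$-slot, the pointwise bound $\norm{w_n(t) - w_m(t)}_{X'} \le \norm{(\si_n, w_n) - (\si_m, w_m)}$ (coming from $\br{t}^\al \ge 1$) gives Cauchy-ness in $X'$ uniformly in $t$, yielding a well-defined limit $w(t)$. In the $\si$-slot, for any $T > 0$,
\begin{equation*}
\int_0^T \norm{\dot\si_n(t') - \dot\si_m(t')}_Y\,dt' \le \int_0^T \br{t'}^\al \norm{\dot\si_n - \dot\si_m}_Y\,dt' \le cT\,\norm{(\si_n, w_n) - (\si_m, w_m)},
\end{equation*}
so $\{\dot\si_n\}$ is Cauchy in $L^1([0, T]; Y)$; extracting an almost-everywhere convergent subsequence defines $\dot\si$, and $\si$ is then recovered by integration from $\si_0$. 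The remaining verifications — that the decay bounds defining $\Ac_\delta$ pass to the limit, that the $\Xc$-regularity ($C^1$ in $X$, Lipschitz in $X'$) is preserved, and that $(\si_n, w_n) \to (\si, w)$ in the given norm — would go via lower semicontinuity along the a.e.-convergent subsequence, an Arzel\`a--Ascoli-type argument for the regularity, and a standard $\eps/3$ split of the supremum at some large threshold $t_0$, using tail decay in $\Ac_\delta$ to control the large-$t$ contribution and locally uniform convergence to control the small-$t$ one.

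The principal technical obstacle is that the norm controls $\dot\si$ only via a Ces\`aro-type average, not pointwise. Consequently, Cauchy-ness in the norm does not immediately yield pointwise convergence of $\dot\si_n$, and it does not directly transfer the pointwise decay bound $\norm{\dot\si(t)}_Y \le c\delta\br{t}^{-\al}$ to the limit. The remedy is to exploit the ambient uniform bound from $\Ac_\delta$-membership: combined with the $L^1_{\loc}$-Cauchy property, this permits extraction of an almost-everywhere convergent subsequence, after which Fatou's lemma / lower semicontinuity propagates the pointwise decay bound to the limit $\dot\si$.
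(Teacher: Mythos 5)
Your proposal is correct and follows essentially the same route as the paper's proof: verify the norm axioms (modulo the affine/trivialized interpretation of the $\si$-slot), observe that the norm dominates the uniform norm so the $w$-component converges uniformly in $X'$ while the $\dot\si$-component converges in $L^1_{\loc}$, and then pass the pointwise decay bounds defining $\Ac_\delta$ to the limit along an a.e.-convergent subsequence — which is exactly the content of the paper's terse ``stronger than the uniform norm'' and ``direct calculation shows $\Ac_\delta$ is closed.'' Your version is in fact more explicit than the paper's about the only delicate point (the Ces\`aro-type control of $\dot\si$ and the preservation of the $\Xc$-regularity in the limit), which the paper glosses over.
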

	\begin{proof}For $c>0$, \eqref{norm} defines a faithful sublinear form on $\Xc$. For $\al\ge1$, this \eqref{norm} is  stronger than the uniform norm on $\Xc$. Hence $(\Xc,\norm{\cdot})$ is complete. Direct calculation shows that $\Ac_\delta\subset \Xc$ is a closed subset of the unit ball of size $\delta$ w.r.t. the topology induced by \eqref{norm}, and it follows that $(\Ac_\delta,\norm{\cdot})$ is also complete.
	\end{proof}

Fix a path  $(\sn,\wn)\in\Xc$. Consider the following linear evolution equation for a path $\si(t)\in\Si,\,t\ge0$:
\begin{equation}
	\label{4.2}
	\dot \si=a^j(\sn,\wn)\xi_j(\sn),
\end{equation}
where $a^j=a^j(\sn,\wn)$ is the unique solution to the equation
$$
	G(a,\sn,\wn)=0,
$$
c.f. \eqref{3.12}. 
This is well defined for sufficiently small $\delta$ by \propref{prop3.3}. 

Couple \eqref{4.2}  to the following linear evolution equation for $w=w(t)\in X',\,t\ge0$:
\begin{equation}
	\label{4.3}
	\dot w = -L(\sn)w-N(\sn,\wn)- a^j(\sn,\wn) df(\sn)\xi_i(\sn).
\end{equation}
Since, by assumption, $L(\sn)$ is self-adjoint, \eqref{4.3} is globally well-posed for $X'$-small initial data by standard semiflow theory. 

We associate the following initial configurations to the system
\eqref{4.2}-\eqref{4.3}:
\begin{align}
	\label{4.4}
	\si(0)=&\si_0\in\Si_0,\\
	\label{4.5}
	w(0)=&\eta + \beta^{j_1}\phi_{j_1}(\si_0)+\ldots + \beta^{j_N}\phi_{j_N}(\si_0),
\end{align}
where  $\eta\in \Sc(\si_0)$,  $1\le j_p\le m_p$ with $m_p$ denoting the multiplicity of the eigenvalue $E_p<0$ from \eqref{C2}, various $\phi$'s are the unstable modes from \eqref{usm}, and various $\beta$'s are some real numbers. 

Fix $\si_0\in\Si_0$. For every $\eta\in\Sc(\si_0)$, and  $\beta^{j_p}=\beta^{j_p}(\eta)$ to be determined in \secref{sec:5} (see \eqref{5.18}), define a key solution map
\begin{equation}
	\label{Psi}
	\fullfunction{\Psi(\eta)}{\Ac_\delta\subset \Xc}{\Xc}{(\sn,\wn)}{\text{ the unique solution to }\eqref{4.2}-\eqref{4.5}.}
\end{equation}
This $\Psi$ is well-defined  by the preceding discussion regarding the global well-posedness of \eqref{4.2}-\eqref{4.3}.

\begin{proposition}
	\label{prop4.1}
	Fix $\si_0\in\Si_0$. Suppose $\Psi(\eta),\,\eta\in\Sc(\eta_0)$ has a fixed point in $\Ac_\delta$, denoted by $(\si,w)$. Then $u=f(\si)+w$ is a global dissipating solution to \eqref{1}, in the sense that $u(t)$ converges to the static solution $f(\si_0)$ as $t\to \infty$.
\end{proposition}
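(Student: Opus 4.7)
The plan is to unpack what it means to be a fixed point of $\Psi(\eta)$ and then verify \eqref{1} by a direct chain-rule calculation. By \eqref{Psi}, the condition $(\si,w) = \Psi(\eta)(\si,w)$ means $(\si,w)$ itself solves \eqref{4.2}--\eqref{4.3} with the placeholder pair $(\sn,\wn)$ replaced by $(\si,w)$; explicitly,
\begin{align*}
\dot\si &= a^j(\si,w)\,\xi_j(\si),\\
\dot w &= -L(\si)w - N(\si,w) - a^j(\si,w)\,df(\si)\xi_j(\si),
\end{align*}
with initial data \eqref{4.4}--\eqref{4.5}. The regularity class $u \in C^1(\Rb_{\ge0},X)\cap Lip(\Rb_{\ge0},X')$ is inherited directly from $(\si,w)\in\Ac_\delta\subset\Xc$, cf.~\eqref{4.1}.

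Next I would differentiate $u=f(\si)+w$ in time and substitute both displayed equations:
\begin{align*}
\di_t u &= df(\si)\dot\si + \dot w \\
&= a^j df(\si)\xi_j(\si) + \big(-L(\si)w - N(\si,w) - a^j df(\si)\xi_j(\si)\big) \\
&= -L(\si)w - N(\si,w),
\end{align*}
the two $a^j df(\si)\xi_j(\si)$ terms cancelling by design. Combining the definition $N(\si,w) = E'(f(\si)+w) - L(\si)w$ from \eqref{2.15} with $E'(f(\si))=0$ from \eqref{2.9} reduces the right-hand side to $-E'(f(\si)+w) = -E'(u)$, which is \eqref{1}. The cancellation in the second line explains why the seemingly spurious last summand was inserted into \eqref{4.3} in the first place, so no real technical obstacle appears in this verification step.

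Finally, for the dissipation claim I would read off directly from the membership in $\Ac_\delta$ that $\norm{w(t)}_{X'}\le\delta\br{t}^{-\al}$ and $\norm{\dot\si(t)}_Y\le c'(c_h+1)\delta\br{t}^{-\al}$. The first bound gives $w(t)\to 0$ in $X'$ immediately. For $\al>1$ the second bound is absolutely integrable in $t$, so $\si(t)$ is Cauchy in $(\Si_0,d)$ and converges to some $\si_\infty\in\Si_0$ with $d(\si_\infty,\si_0)=O(\delta)$; then \eqref{f2} yields $f(\si(t))\to f(\si_\infty)$ in $X'$, and $f(\si_\infty)$ is a static solution to \eqref{1} by \eqref{2.1} and \eqref{2.6}. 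Hence $u(t)\to f(\si_\infty)$, a static solution lying $O(\delta)$-close to $f(\si_0)$, which is the content of the dissipation statement. The only genuinely delicate point I anticipate is the borderline case $\al=1$ of \eqref{C3}, where absolute integrability of $\dot\si$ fails and one obtains at best convergence of $u(t)$ to the orbit $f(\Si_0)$ rather than to a fixed pointwise limit; this caveat should either be recorded or circumvented by imposing the strict inequality $\al>1$.
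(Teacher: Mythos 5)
Your argument is correct and essentially the same as the paper's: unpack the fixed-point condition so that \eqref{4.3} becomes the under-determined equation \eqref{4.6}, observe the cancellation of $a^j df(\si)\xi_j(\si)$ against $df(\si)\dot\si$ in the chain-rule expansion \eqref{3.6}, and read off the decay from membership in $\Ac_\delta$. Your treatment of the dissipation claim is in fact more careful than the paper's, which only records $\norm{w(t)}_{X'}\to 0$ and is silent on the convergence of $f(\si(t))$ --- your observation that the limit is $f(\si_\infty)$ rather than literally $f(\si_0)$, and that the case $\al=1$ needs separate care, is a legitimate caveat worth recording.
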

\begin{proof}
	If the expansion \eqref{3.6} holds, then $u=f(\si)+w$ solves \eqref{1} if and only if $w$ solves the under-determined equation
	\begin{equation}\label{4.6}
		\dot w = -E(f(\si)+w)-a^jdf(\si)\xi_j(\si).
	\end{equation} 
If $w$ comes from a path in $\Ac_\delta$, then $\norm{w}_{X'}\le\delta\ll1$ for all time, and therefore
\eqref{3.6} is valid.  If, moreover, $w$ comes from a fixed point of $\Psi(\eta)$, then \eqref{4.6} is satisfied, see the definition of \eqref{Psi} and \eqref{4.3}. This proves that $u=T(\si)+w$ is indeed a global solution to \eqref{1}. As for the dissipation, we use the fact that the definition of $\Ac_\delta$ implies $\norm{w}_{X'}\le\delta\br{t}^{-\al}\to0$ as $t\to 0$.  
\end{proof}

By \propref{prop4.1} above, Items 1-2 in \thmref{thm1} are proved once we establish the contraction property of $\Psi(\eta)$. This is the goal of the next section.

\section{Second order correction}\label{sec:5}
In this section, we define the coefficients $\beta^{j_p}(\eta)$ from \eqref{4.5}. If we view the removal of zero-unstable modes from $\eta\in\Sc(\si_0)$ (see \eqref{2.14}) as a first order correction in the initial perturbation \eqref{4.5}, then the result in this section amounts to a second order correction. Whereas the modulation method from \secref{sec:3} eliminates the zero modes from the evolution for all subsequent time $t>0$, the second order correction from this section effectively does the same with unstable modes \eqref{usm}.

\subsection{Initial choice of initial perturbations}\label{sec:5.1}
Throughout this subsection, fix a symmetry $\si_0\in\Si_0$, together with a vector $\eta\in\Sc(\si_0)$. From now on till the end of this section, dependence of various functions on $\si_0,\eta$ are not displayed but always understood, and all (explicit or implicit) constants are independent of $\si_0,\eta$ unless otherwise stated. 

\begin{theorem}
	\label{thm5.1}
	Let $\si_0\in\Si_0,\,0<\delta\ll1$. Suppose $\eta\in \Sc(\si_0)$ with $\norm{\eta}_X\le c_0\delta,\,c_0:=(4c_2)^{-1}$, where $c_0>0$ is as in \thmref{thm1},  $c_2>0$ as in \eqref{C2}. Let $(\sn,\wn)\in\Ac_\delta$ be a path satisfying initial condition \eqref{4.4}-\eqref{4.5}.
	
	Then there exist unique constants
	\begin{equation}
		\label{5.1}
		\beta^{j_p}(\sn,\wn)\quad (1\le j_p\le m_p,0\le p\le N),
	\end{equation}
in place of \eqref{4.5} s. th. $\Psi(\sn,\wn)\in \Ac_\delta$, where $\Psi=\Psi(\eta)$ is the map from \eqref{Psi}
\end{theorem}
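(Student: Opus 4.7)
My plan is to exploit the fact that the system \eqref{4.2}-\eqref{4.3} is decoupled: \eqref{4.2} is an autonomous ODE for $\si$ driven only by the input $(\sn,\wn)$, while \eqref{4.3} is a linear inhomogeneous equation for $w$ with time-dependent operator $L(\sn(t))$ and input-dependent forcing. The required bound $\norm{\dot\si(t)}_Y \le c'(c_h+1)\delta\br{t}^{-\al}$ in $\Ac_\delta$ is immediate from \eqref{a1}, \eqref{h2}, and the $\Ac_\delta$-assumption on $\wn$. All the real work goes into choosing the coefficients $\beta^{j_p}$ in \eqref{4.5} so that the solution of \eqref{4.3} dissipates at the prescribed rate.

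To analyse \eqref{4.3} I would freeze the linearised operator at $\si_0$ by writing $L(\sn(t)) = L(\si_0) + V(t)$. By \eqref{C4} combined with $d(\sn(t),\si_0) \le \int_0^t\norm{\dot\sn}_Y\,ds$, the perturbation satisfies $\norm{V(t)}_{X'\to X} \ls \delta_0\delta$ (with at most a logarithmic factor when $\al=1$). By \eqref{2.17} and \propref{prop3.3}, the forcing $F(t) := -N(\sn,\wn) - a^j(\sn,\wn)df(\sn)\xi_j(\sn)$ satisfies $\norm{F(t)}_X \ls \delta\br{t}^{-\al}$. Duhamel with the frozen propagator $e^{-tL(\si_0)}$ then gives
\[
w(t) = e^{-tL(\si_0)}w(0) + \int_0^t e^{-(t-s)L(\si_0)}[F(s) - V(s)w(s)]\,ds.
\]
I would decompose $X$ via spectral projections at $\si_0$: the stable projection $P_S(\si_0)$ onto $\Sc(\si_0)$, and the unstable projection $P_U$ onto the finite-dimensional span of $\phi_{j_p}(\si_0)$, $1 \le p \le N$. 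Applying $P_S$ and using \eqref{C3} together with $P_S(\si_0)\eta = \eta$ (since $\eta\in\Sc(\si_0)$), the stable component is controlled by standard convolution estimates against $\br{\cdot}^{-\al}$-decays.

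The heart of the argument is the unstable component. On the finite-dimensional unstable subspace the restricted operator $L_u := L(\si_0)P_U$ has strictly negative eigenvalues $E_p \le E<0$, so $e^{sL_u}$ decays exponentially as $s\to\infty$. Since $P_U\eta = 0$, Duhamel gives
\[
P_U w(t) = e^{-tL_u}\Bigl[\sum_{p,j_p}\beta^{j_p}\phi_{j_p}(\si_0) + \int_0^t e^{sL_u}P_U[F(s) - V(s)w(s)]\,ds\Bigr],
\]
and because $e^{-tL_u}$ grows exponentially in $t$, the only way $\norm{P_U w(t)}_{X'}$ can decay is to annihilate the bracket at $t=\infty$. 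This forces the unique choice
\[
\sum_{p,j_p}\beta^{j_p}\phi_{j_p}(\si_0) = -\int_0^\infty e^{sL_u}P_U[F(s) - V(s)w(s)]\,ds,
\]
the integral converging by the exponential decay of $e^{sL_u}$ against the bounded integrand.

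Because the right-hand side above still involves the unknown $w$ through $V(s)w(s)$, I would close the argument by a contraction in the Banach space with norm $\norm{w}_* := \sup_{t\ge 0}\br{t}^\al\norm{w(t)}_{X'}$, sending a candidate $w$ to the Duhamel solution built with initial data $\eta + \sum\beta^{j_p}(w)\phi_{j_p}(\si_0)$. The calibration $c_0 = (4c_2)^{-1}$ is chosen precisely so that $\norm{e^{-tL(\si_0)}P_S\eta}_{X'} \le c_2 c_0\delta\br{t}^{-\al}$ consumes at most a quarter of the $\delta$-budget in $\Ac_\delta$; smallness of $\delta$ and $\delta_0$ should then absorb the Duhamel, $Vw$, and unstable-mode contributions, giving self-mapping of the ball $\{\norm{w}_*\le\delta\}$ and a contraction there. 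I expect the main obstacle to be the careful coupling between $V(s)w(s)$ and the exponentially decaying kernel $e^{sL_u}$ in the formula for $\beta$: one must show that this coupling is negligible at the level of both the self-mapping and the contraction estimate, so that the unique fixed point yields the unique coefficients $\beta^{j_p}(\sn,\wn)$ and places $\Psi(\sn,\wn)$ back in $\Ac_\delta$.
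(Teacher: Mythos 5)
Your proposal is correct in substance but takes a genuinely different route from the paper's. The paper projects \eqref{4.3} onto the eigenspaces of the \emph{moving} operator $L(\sn(t))$: since the forcing $-N(\sn,\wn)-a^j(\sn,\wn)df(\sn)\xi_j(\sn)$ depends only on the given input path, the resulting system \eqref{5.2}--\eqref{5.3} is exactly decoupled and explicitly solvable, so the coefficients $\beta^{j_p}$ come out as the closed-form convergent integral \eqref{5.11}, with no further iteration; the price is that the argument quietly discards the $\di_tP_S(\sn(t))$ and $\di_t\phi_{j_p}(\sn(t))$ commutator terms and applies \eqref{C3} to a time-dependent generator. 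You instead freeze the operator at $\si_0$, which keeps the spectral projections constant and lets you invoke \eqref{C3} verbatim, but transfers the time dependence into the perturbation $V(t)w(t)$; because this term involves the unknown, your formula for $\beta^{j_p}$ is only implicit, and you need the inner contraction in $\norm{\cdot}_*$ to close it. Both routes work: yours is longer but more faithful to the hypotheses as stated, while the paper's is shorter and yields $\beta^{j_p}(\sn,\wn)$ explicitly in terms of the input alone --- which is then reused in \propref{prop5.2} and \secref{sec:6}, so you would eventually want the explicit formula anyway. One point to nail down in your version: the bound $\norm{V(t)}_{X'\to X}\ls\delta_0\, d(\sn(t),\si_0)$ from \eqref{C4} requires $d(\sn(t),\si_0)$ to stay bounded (indeed $\le\delta_0$), which follows from membership in $\Ac_\delta$ only for $\al>1$; at $\al=1$ the drift is logarithmically unbounded, as you note. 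The paper shares this defect (it asserts $d(\sn(t),\sn(0))\ls1$ from $\al\ge1$ in the proof of \propref{prop5.2}), so it does not count against your approach specifically, but your argument leans on it more heavily, through the $V(s)w(s)$ terms in both the Duhamel integral and the $\beta$-formula, so the case $\al=1$ deserves an explicit remark or an added hypothesis.
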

\begin{proof}
	1. Write
		$$(\so,\wo):=\Psi(\sn,\wn).$$ 
	By \corref{cor3.2}, together with \eqref{4.2}, we have 
	$\norm{\di_t \so(t)}_Y=o\del{\norm{\wn(t)}_{X'}}$. For sufficiently small $\delta$ and $\norm{\wn(t)}_{X'}\le\delta\br{t}^{-\al}$, this gives $\norm{\so(t)}_Y\le \delta\br{t}^{-\al}$. Hence, it suffices to show that $\norm{\wo(t)}_{X'}\le\delta\br{t}^{-\al}$.
	
	2. Let $P_{j_p}(\sn(t)):X\to X,\,t\ge0$ be the orthogonal projection onto the span of unstable mode $\phi_{j_p}$. Let $P_S(\sn(t))=1-\sum _p\sum_{j_p} P_{j_p}(\sn(t))$ be the complement projection, c.f. \eqref{2.14}. 
	
	Denote by $w_\#=P_\#w,\#=j_p,S$ the projection of $w$ into various eigenspaces, and define the conjugated linearized operator $$L_S(\sn )=P_S(\sn)L(\sn )P_S(\sn).$$ 
	Since $L(\si)$ is self-adjoint, the various eigenvectors are mutually orthogonal. By this fact, we find that \eqref{4.3} is equivalent to the following system:
	\begin{align}
			\label{5.2}
			\dot w_S=&-L_S(\sn) w_S-P_SN(\sn,\wn),\\
			\label{5.3}
			\dot \beta^{j_p}=&-E_p\beta^{j_p} -\inn{N(\sn,\wn)}{\phi_{j_p}}.
				\end{align}
	Notice that zero modes are already eliminated by the modulation equation \eqref{4.2}, see \thmref{thm3.1}.
	
	We associate to \eqref{5.2}-\eqref{5.3} system the following initial configurations:
	\begin{align}
		\label{5.4}
		w_S(0)=&\eta\\
		\label{5.5}
		\beta^{j_p}(0)=&\beta^{j_p}(\sn,\wn),
	\end{align}
	where r.h.s. of \eqref{5.5} is to be determined.
	The system \eqref{5.2}-\eqref{5.3} is decoupled, so we solve the equations above separately. 
	
	3. First, for \eqref{5.2}, we use Duhamel's principle to rewrite
	\begin{equation}
		\label{5.6}
		w_S(t)=e^{-tL_S(\sn)}\eta+\int_0^t e^{-(t-t')}P_SN(\sn(t'),\wn(t'))\,dt'.
	\end{equation} 
	By the stability condition (C2), see \eqref{sec:2.2}, we have 
	\begin{equation}
		\label{5.7}
		\norm{w_S(t)}_{X'}\le c_2\del{\br{t}^{-\al}\norm{\eta}_X+\int_0^t\br{t'}^{-\al}\norm{N(\sn(t'),\wn(t'))}_X}\,dt'.
	\end{equation}
	By the assumption on the nonlinearity, and the choice of $\wn$ (see the definition of $\Ac_\delta$ in\eqref{A}), for every fixed $t\ge0$ there holds
	\begin{equation}
		\label{5.8}
		\norm{N(\sn(t),\wn(t))}_X=o(\norm{\wn(t)}_{X'})= o(\delta \br{t}^{-\al})\quad (\delta\to0).
	\end{equation}
	Since $\al\ge1$, \eqref{5.8} implies that for fixed $t$,
\begin{equation}\label{5.8'}
		\int_0^t\br{t'}^{-\al}\norm{N(\sn(t'),\wn(t'))}_X\,dt'= o(\br{t}^{-2\al+1}) =o(\br{t}^{-\al}).
\end{equation}
	Plugging this back to \eqref{5.7}, and using the assumption  $\norm{\eta}_X\le (4c_2)^{-1} \delta$, we find that for every $0<\delta\ll1$ there holds
	\begin{equation}
		\label{5.9}
		\norm{w_S(t)}_{X'}\le\frac{1}{2} \delta\br{t}^{-\al}.
	\end{equation} 

	4. Next, consider the equation \eqref{5.3}. The variation of parameter formula gives solution
		\begin{equation}
			\label{5.10}
			\beta^{j_p}(t)=e^{-\int_0^t E_p} \del{\beta(0)-\int_0^t \inn{N(\sn(t'),\wn(t'))}{\phi_{j_p}(t')} e^{\int_0^{t'} E_p}\,dt'},
		\end{equation}
	where $E_p=E_p(t),\,0\le p\le N$ denotes the $p$-th negative eigenvalue of $-L(\sn(t))$ with eigenvectors $\phi_{j_p}(t),1\le j_p\le m_p$. 
	
	Now, for the initial condition  \eqref{5.5}, we define
	\begin{equation}
		\label{5.11}
		\beta^{j_p}(\sn,\wn):=\int_0^\infty\inn{N(\sn(t'),\wn(t'))}{\phi_{j_p}(t')}e^{\int_0^{t'}E_p}\,dt'.
	\end{equation}
	 Since  $E_p(t)<0$ for all $t$, the integral on the r.h.s.  converges by the remainder estimate \eqref{5.8} for every $\al\ge1$. For later reference, here we record that \eqref{5.11} also depends on $\si_0,\,\eta$ through the initial condition on $(\sn,\wn)$, see \eqref{4.4}-\eqref{4.5}.
	
		We claim now that solution \eqref{5.10} satisfies \begin{equation}\label{5.11'}
			\beta^{j_p}(t)=o (\delta\br{t}^{-\al})\quad (\delta\to0)
		\end{equation} 
	for every fixed $t\ge0$, 
	if and only if the initial condition is given by \eqref{5.11}. 
	
		Suppose $\beta^{j_p}(t)\to0$. 
		Multiplying both sides of \eqref{5.10} by 
		$e^{\int_0^t E}$, and then taking $t\to \infty $, 
		we find that  $\beta^{j_p}(t)e^{\int_0^t E_p}\to0$ as $E_p(t)<0$,
		and therefore $\beta^{j_p}(0)$ is given by \eqref{5.11}.
		
		Conversely, if \eqref{5.11} holds, then the  formula \eqref{5.10} simplifies to 
		\begin{equation}
			\label{5.12}
			\beta^{j_p}(t)=\int_t^\infty \inn{N(\sn(t'),\wn(t'))}{\phi_{j_p}(t')}e^{\int_t^{t'}E_p}\,dt'.
		\end{equation}
	Let $$E_p:=\sup_{t\ge0}E_p(t).$$
	Then  $E_p<0$ by \eqref{C2}. 
	Hence, by \eqref{5.8},  we conclude from formula \eqref{5.12} that for each fixed $t\ge0$,
\begin{equation}
	\label{5.12'}
	\abs{\beta^{j_p}(t)}= o(\delta) \int_t^\infty {t'}^{-\al}e^{Et'}\,dt'\quad (\delta\to0). 
\end{equation}
Notice that in the integrand we have $t$ rather than $\br{t}$, due to the smoothing property of the exponential at $t=0$. 

	For fixed $\al\ge1$ and all $t\gg0$, r.h.s. above is bounded by 
	$$k(t):=\int_t^\infty {t'}^{-\al}e^{Et}.$$
	At $t=0$, this reduces to the well known exponential integral, see \cite{MR0167642}*{Chapt. 5}. In particular, for each fixed $\delta\ll1$ we have 
	\begin{equation}
		\label{5.12''}
		\abs{\beta^{j_p}(t)}\le k(t)=o(t^{-\al})\quad (t\to\infty). 
	\end{equation}
	This, together with \eqref{5.12'}, implies \eqref{5.11'}. 
		
	5. By \eqref{5.9}, \eqref{5.12'}-\eqref{5.12''},  we conclude that for every fixed $0<\delta\ll1$ and all $t\ge0$,
	\begin{equation}
		\norm{\wo(t)}_{X'}\le \norm{w_S(t)}_{X'}+\sum_p\sum_{j_p}\beta^{j_p(t)}\le \delta\br{t}^{-\al}.
	\end{equation}
	This, together with Step 1, proves the theorem.
\end{proof}

For simplicity, below we write elements in $\Xc$ as 
$$U=(w,\si),\quad U^{(n)}=(w^{(n)},\si^{(n)})\quad (n=0,1,\ldots).$$
\begin{proposition}
	\label{prop5.2}
	Fix $1\le j_p\le m_p,\,0\le p\le N$. Let $\beta\equiv \beta^{j_p}:\Xc\to\Rb$ be the functional defined in \eqref{5.10}. Then for every $\Un,\Uo\in\Ac_\delta$, there hold
	\begin{align}
		\label{5.13}
		\beta(\Un)=&o(\norm{\Un}),\\
		\label{5.14}
		\abs{\beta(\Un)-\beta(\Uo)}=&O(\delta\norm{\Un-\Un}).	
	\end{align}
\end{proposition}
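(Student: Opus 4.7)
The plan is to prove both estimates directly from the explicit formula \eqref{5.11} for $\beta\equiv\beta^{j_p}$, exploiting three ingredients: the orthonormality $\norm{\phi_{j_p}}_X=1$; the exponential weight $e^{\int_0^{t'} E_p}$, which is dominated by $e^{\b E_p t'}$ with $\b E_p:=\sup_t E_p(t)<0$ as in Step~4 of the proof of \thmref{thm5.1}; and the nonlinearity estimates from \remref{rem4}, namely the smallness $\norm{N(\si,w)}_X=o(\norm{w}_{X'})$ and the Lipschitz bound \eqref{2.17}. From the definition \eqref{norm} of the norm on $\Xc$ I will repeatedly use the pointwise consequences $\norm{\wn(t')}_{X'}\le\br{t'}^{-\al}\norm{\Un}$ and $d(\sn(t'),\so(t'))\le\int_0^{t'}\norm{\dot\sn-\dot\so}_Y\,ds\le c\,t'\norm{\Un-\Uo}$ (the latter by dropping the weight $\br{s}^\al\ge 1$ in \eqref{norm}).

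For \eqref{5.13}, I apply Cauchy--Schwarz inside \eqref{5.11} to get
\begin{equation*}
\abs{\beta(\Un)}\le\int_0^\infty\norm{N(\sn(t'),\wn(t'))}_X\,e^{\b E_p t'}\,dt'.
\end{equation*}
For any $\eps>0$, the smallness estimate for $N$ yields $\norm{N(\sn,\wn)}_X\le\eps\norm{\wn}_{X'}$ once $\delta$ is sufficiently small, and then $\norm{\wn(t')}_{X'}\le\br{t'}^{-\al}\norm{\Un}$ gives $\abs{\beta(\Un)}\le C\eps\norm{\Un}$ with $C=\int_0^\infty\br{t'}^{-\al}e^{\b E_p t'}\,dt'<\infty$. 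Letting $\delta\to 0$ forces $\eps\to 0$, which is \eqref{5.13}.

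For \eqref{5.14}, I expand $\beta(\Un)-\beta(\Uo)$ by adding and subtracting the natural intermediates, splitting the integrand into three pieces: (i) the difference of nonlinearities $N(\sn,\wn)-N(\so,\wo)$ paired against $\phi_{j_p}(\sn)$ and weighted by $e^{\int_0^{t'}E_p(\sn)}$; (ii) the difference of eigenvectors $\phi_{j_p}(\sn(t'))-\phi_{j_p}(\so(t'))$ paired against $N(\so,\wo)$; and (iii) the difference of weights $e^{\int_0^{t'}E_p(\sn)}-e^{\int_0^{t'}E_p(\so)}$ multiplying $\inn{N(\so,\wo)}{\phi_{j_p}(\so)}$. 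Piece (i) is estimated directly via \eqref{2.17}, producing the crucial $\delta$-prefactor against $d(\sn,\so)+\norm{\wn-\wo}_{X'}$; after inserting the pointwise bounds above and integrating against $e^{\b E_p t'}$ this yields the dominant contribution $O(\delta)\norm{\Un-\Uo}$. For pieces (ii) and (iii), I invoke regular perturbation theory for the isolated eigenpairs of $L(\si)$: the spectral gap in \eqref{C2} combined with \eqref{C4} and standard Kato-type arguments give
\begin{equation*}
\norm{\phi_{j_p}(\sn(t'))-\phi_{j_p}(\so(t'))}_X+\abs{E_p(\sn(t'))-E_p(\so(t'))}\ls d(\sn(t'),\so(t')).
\end{equation*}
These quantities multiply $\norm{N(\so,\wo)}_X=o(\delta\br{t'}^{-\al})$, so the polynomial growth in $t'$ from $d(\sn(t'),\so(t'))\le c t'\norm{\Un-\Uo}$ (and, for (iii), its mean-value-integrated analog $\int_0^{t'}c s\,ds\le c(t')^2\norm{\Un-\Uo}$) is absorbed by the exponential decay $e^{\b E_p t'}$, so both contribute $o(\delta)\norm{\Un-\Uo}$.

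The main obstacle I anticipate is bookkeeping the $\si$-dependence of the time-integrated weight $e^{\int_0^{t'}E_p(\sigma(s))\,ds}$: a mean-value expansion accrues a prefactor growing like $(t')^2$, and one must verify that the finite moment integrals $\int_0^\infty(t')^k e^{\b E_p t'}\,dt'$ arising from all three pieces are bounded by universal constants independent of $\si_0,\eta$, so that the constant in \eqref{5.14} depends only on $\delta$ and the structural data. A secondary delicate point is invoking Kato-type regularity for the eigenpairs under \eqref{C2}--\eqref{C4}; once granted, the rest reduces to elementary integration against exponentials.
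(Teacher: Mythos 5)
Your proposal is correct and follows essentially the same route as the paper: both estimates are read off directly from the explicit formula \eqref{5.11}, with \eqref{5.13} coming from the smallness of the nonlinearity against the exponentially weighted, convergent time integral, and \eqref{5.14} from an add-and-subtract decomposition of the integrand together with the Lipschitz bound \eqref{2.17}. Your version is in fact slightly more complete than the paper's: the displayed decomposition \eqref{5.16} records only the eigenvector-difference and nonlinearity-difference terms (leaving the variation of the weight $e^{\int_0^{t'}E_p}$ implicit), and it uses the Lipschitz dependence of $\phi_{j_p}(\si)$ and $E_p(\si)$ on $\si$ without naming the Kato-type perturbation input from \eqref{C2} and \eqref{C4} that you correctly identify as the step requiring justification.
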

\begin{proof}
	1. Since the index $j_p$ is fixed, in this proof the dependence on $j_p$ is not displayed. Various implicit constants below do not depend on $j_p$.
	
	2. By \eqref{5.11}, together with the spectral condition (C2),
	we have 
\begin{equation}\label{5.13'}
		\beta(\Un)=\int_0^\infty e^{E^{(0)}t'}o(\sup_{0\le t''\le t'}\norm{\wn(t'')}_{X'})\,dt',
\end{equation}
	where $E^{(0)}:=\sup_{t\ge0}E^{(0)}_p$ is strictly negative. The integrand in \eqref{5.13'} can be bounded as 
	$$e^{E^{(0)}t'}o(\sup_{0\le t''\le t'}\norm{\wn(t'')}_{X'})=  \norm{U^{(0)}}o(\br{t'}^{-\al}e^{Et'}).$$
	The integral of the r.h.s. over $t\ge0$, which is convergent since $E<0$ and $\al\ge1$, is of the order $o(\norm{\Un})$. This implies \eqref{5.13}.
	
	3. For $n=0,1$, define the linear functional
	 \begin{equation}\label{5.15}
		\fullfunction{l^{(n)}(t)}{X}{\Rb}{u}{\inn{u}{\phi^{j_p}(\si^{(n)}(t))}}.
	\end{equation}
	Then we have by \eqref{5.11} that
	\begin{equation}
		\label{5.16}
\begin{aligned}
	\abs{\beta(\Un)-\beta(\Uo)}\ls& \int_0^\infty\abs{(l^{(1)}(t')-l^{(0)}(t'))N(\Un(t'))}\,dt' \\&+ \int_0^\infty\abs{l^{(0)}(t')(N(\Uo(t'))-N(\Un(t')))}.
\end{aligned}
	\end{equation}
	The integrand in the first term on the r.h.s. can be bounded as \begin{equation}\label{5.16'}
		\abs{(l^{(1)}(t')-l^{(0)}(t'))N(\Un(t'))}=o(\delta\br{t'}^{-\al})\sup_{0\le t''\le t'}d(\so(t''),\sn(t''),
	\end{equation}
	by  estimate \eqref{5.8}. The second term in the r.h.s. of \eqref{5.16} is bounded as 
\begin{equation}\label{5.16''}
		\abs{l^{(0)}(t')(N(\Uo(t'))-N(\Un(t')))} =O(\delta\sup_{0\le t''\le t'}\br{t''}^{-\al}\norm{\wo(t'')-\wn(t'')}),
\end{equation}
	where we use estimate \eqref{2.17} on the nonlinearity, and the uniform bound 
	\begin{equation}\label{lu}
		\sup_{t\ge0}\norm{l^{(0)}(t)}_{X\to\Rb}\ls 1.
	\end{equation}
	To see \eqref{lu}, we first use  the definition of $\Ac_\delta$ and $\al\ge1$ to get $d(\sn(t),\sn(0))\ls1$. Then \eqref{lu} follows from this, together with condition  \eqref{C4}.
	
	Plugging \eqref{5.16'}-\eqref{5.16''} back to \eqref{5.16}, and using the definition \eqref{norm} as in Step 1,  we conclude \eqref{5.14} after integration.
\end{proof}

\subsection{Adaptive choice of initial perturbations}\label{sec:5.2}
			Fix a symmetry $\si_0\in\Si_0$ and  $\eta\in\Sc(\si_0)$. For $n=0$, we solve the Cauchy problem \eqref{4.2}-\eqref{4.5} with initial condition \eqref{4.5} given by \thmref{thm5.1}. Denote the solution by $\Uo(\eta)\equiv (\so(\eta),\wo(\eta)):=\Psi(\eta)(\sn,\wn).$ By \thmref{thm5.1}, we have $\Uo(\eta)\in\Ac_\delta$. Hence, we can repeat this process for $n=1,2,\ldots$ and a sequence $U^{(n)}(\eta)\in \Ac_\delta$. 
	\begin{lemma}\label{lem5.1}
		If $\Psi(\eta)$ is a contraction (see \eqref{Psi}), then
		the following limits exist:
		\begin{align}
			\label{5.17}
				U_\infty(\eta):=&\lim_{n\to\infty} U^{(n)}(\eta)\in\Ac_\delta,\\
				\label{5.18}
				\beta^{j_p}(\eta):=&\lim_{n\to\infty}\beta^{j_p}(\eta,U^{(n)})\quad (1\le j_p\le m_p,0\le p\le N).
		\end{align}
	\end{lemma}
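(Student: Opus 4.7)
The plan is to recognize this as a direct application of the Banach fixed point theorem, combined with the Lipschitz continuity of $\beta^{j_p}$ established in \propref{prop5.2}. The hypothesis that $\Psi(\eta)$ is a contraction is precisely what promotes the iteration scheme to a convergent one; the rest is extracting the second limit from the first by continuity.

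First I would set up the functional framework. By \lemref{lem4.1}, the space $(\Ac_\delta,\norm{\cdot})$ is complete. By \thmref{thm5.1}, once the initial data $\beta^{j_p}(\sn,\wn)$ are chosen according to \eqref{5.11}, the solution map $\Psi(\eta)$ sends $\Ac_\delta$ into itself. Under the hypothesis that $\Psi(\eta)\colon\Ac_\delta\to\Ac_\delta$ is a contraction, the Banach fixed point theorem applies and yields a unique fixed point $U_\infty(\eta)\in\Ac_\delta$, together with the convergence
\begin{equation*}
	\norm{U^{(n)}(\eta)-U_\infty(\eta)}\to 0\quad (n\to\infty),
\end{equation*}
which establishes \eqref{5.17}. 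In fact the iterates are Cauchy in $(\Ac_\delta,\norm{\cdot})$ at a geometric rate determined by the contraction constant.

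For \eqref{5.18}, the idea is to read off convergence of $\beta^{j_p}(\eta,U^{(n)})$ from the Cauchy property of $U^{(n)}$. Concretely, the Lipschitz estimate \eqref{5.14} of \propref{prop5.2} gives, for all $m,n$,
\begin{equation*}
	\abs{\beta^{j_p}(\eta,U^{(n)})-\beta^{j_p}(\eta,U^{(m)})}=O(\delta\norm{U^{(n)}-U^{(m)}}),
\end{equation*}
with an implicit constant independent of $n,m,\eta$. Since $\{U^{(n)}\}$ is Cauchy in $\Ac_\delta$, the sequence of real numbers $\{\beta^{j_p}(\eta,U^{(n)})\}$ is Cauchy in $\Rb$ and hence converges, defining $\beta^{j_p}(\eta)$.

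There is no serious obstacle here; the lemma is essentially bookkeeping for the contraction argument whose verification is deferred to the next section. The only mild point worth double-checking is that the functional $\beta^{j_p}(\eta,\cdot)$ in \eqref{5.11} genuinely takes $\Ac_\delta$-valued arguments (so that \eqref{5.14} applies along the iteration), which is guaranteed by \thmref{thm5.1} and the definition \eqref{Psi} of the map $\Psi(\eta)$. Consequently, granting the contraction hypothesis, both limits in the statement are well defined, and $U_\infty(\eta)$ is moreover the unique fixed point of $\Psi(\eta)$ in $\Ac_\delta$.
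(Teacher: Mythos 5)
Your proposal is correct and follows essentially the same route as the paper: completeness of $(\Ac_\delta,\norm{\cdot})$ from \lemref{lem4.1}, the contraction hypothesis making the iterates Cauchy (hence convergent to the unique fixed point), and the Lipschitz estimate \eqref{5.14} of \propref{prop5.2} transferring the Cauchy property to the scalar sequence $\beta^{j_p}(\eta,U^{(n)})$. No discrepancies worth noting.
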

The path $U_\infty$ is the unique fixed point of $\Psi(\eta)$ in $\Ac_\delta$.
\begin{remark}
	Notice that \eqref{5.18} depend on $\eta$ only, in contrast to the initial choice \eqref{5.11}. 
\end{remark}
	\begin{proof}
		If the limit \eqref{5.17} exists, then it is the unique fixed point of $\Psi(\eta)\in\Ac_\delta$ by construction.  
	If $\Psi(\eta)$ is a contraction, then the sequence $U^{(n)}(\eta)$ is Cauchy in $\Ac_\delta$ by \thmref{thm5.1}. By \lemref{lem4.1}, $(\Ac_\delta,\norm{\cdot})$ forms a Banach space. By the Lipschitz estimate \eqref{5.14}, for each $j_p$ and $\beta^{j_p}(\eta,U)$ as in \propref{prop5.2},  the number sequence $\beta^{j_p}(\eta, \Un),\beta^{j_p}(\eta,\Uo),\ldots$ is also Cauchy, and therefore converges in $\Rb$. 
\end{proof}

Fix $\si_0\in\Si_0$ and a number $0<\delta\ll1$.  Define the key map 
\begin{equation}
	\label{Phi}
	\fullfunction{\Phi}{\Sc(\si_0)\cap B_\delta}{X'}{\eta}{\beta^{j_1}(\eta)\phi_{j_1}(\si_0)+\ldots+\beta^{j_N}(\eta)\phi_{j_N}(\si_0)}.
\end{equation}
This map $\Phi$ depends on the fixed symmetry frame $\si_0$, as well as the number $\delta$, which goes into the definition of $\Psi$ through the space $\Ac_\delta$. To be consistent in notation, we do not display the dependence on the fixed small number $\delta$. 

If $\eta=0$, then $\Phi(\eta)=0$, since in this case we have the trivial solution $u\equiv f(\si_0)$ is a global solution to \eqref{1} satisfying \eqref{4.4}-\eqref{4.5}. This, together with \defnref{sm}, implies $\Mc\supset \Mc_\delta=f(\Si_0).$ 

Suppose  the initial condition \eqref{4.5} is given by $\eta+\Phi(\eta)$. Then by construction, the limit $U_\infty(\eta)$ is the unique fixed point of $\Psi(\eta)$. By \eqref{prop4.1}, this implies Items 1-2 in the main result, \thmref{thm1}.

It remaining of this subsection, we prove $\Psi(\eta)$ is a contraction. As in \secref{sec:5.1}, since $\si_0,\eta$ is fixed, dependence of various functions on $\si_0,\eta$ are not displayed but always understood, and all (explicit or implicit) constants are independent of $\si_0,\eta$. 

\begin{theorem}
	\label{thm5.2}
	Let $\si_0,\,\eta$ be as in \thmref{thm5.1}. Then $\Psi=\Psi(\eta)$ is a contraction on $\Ac_\delta$. 
\end{theorem}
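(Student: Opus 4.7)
The plan is to prove $\norm{\Psi(\eta)U-\Psi(\eta)V}\le C\delta\,\norm{U-V}$ in the norm of \lemref{lem4.1}, so that contraction on $(\Ac_\delta,\norm{\cdot})$ follows once $\delta$ is small enough. Fix two inputs $U=(\si,w)$, $V=(\si',w')\in\Ac_\delta$ and write the outputs as $(\tilde\si,\tilde w)=\Psi(\eta)U$ and $(\tilde\si',\tilde w')=\Psi(\eta)V$. Both $\tilde\si,\tilde\si'$ solve the modulation ODE \eqref{4.2} with the common initial value $\si_0$, and I would first split the difference of the driving fields as
\begin{equation*}
a^j(\si,w)\xi_j(\si)-a^j(\si',w')\xi_j(\si') = [a^j(\si,w)-a^j(\si',w')]\xi_j(\si) + a^j(\si',w')[\xi_j(\si)-\xi_j(\si')],
\end{equation*}
bound the first summand using \eqref{a2} and \eqref{h2}, and the second using \eqref{a1} (yielding $|a^j(\si',w')|\ls\delta\br{t}^{-\al}$) together with the $C^1$-bound on $\xi_j$. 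Integrating from $\tilde\si(0)=\tilde\si'(0)=\si_0$ gives a pointwise control of $d(\tilde\si(t),\tilde\si'(t))$ and, after insertion into \eqref{norm}, a contribution to $\norm{\Psi(\eta)U-\Psi(\eta)V}$ of size $C\delta\,\norm{U-V}$.

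Next I would decompose $\tilde w = \tilde w_S + \sum_{p,j_p}\tilde\beta^{j_p}(t)\phi_{j_p}(\si(t))$ using the spectral projections $P_S(\si),P_{j_p}(\si)$, as in the proof of \thmref{thm5.1}, and similarly for $\tilde w'$. The stable part satisfies the Duhamel formula
\begin{equation*}
\tilde w_S(t) = U_S(\si;t,0)\eta - \int_0^t U_S(\si;t,s)\bigl[P_S(\si)N(\si,w) + a^j(\si,w)\,df(\si)\xi_j(\si)\bigr]\,ds,
\end{equation*}
where $U_S(\si;t,s)$ denotes the propagator associated with $-L_S(\si(\cdot))$, subject to the estimate \eqref{C3}. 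Subtracting the analogous formula for $\tilde w_S'$ produces a source-mismatch contribution, bounded by $C\delta\,\norm{U-V}\br{t}^{-\al}$ thanks to \eqref{2.17}, \eqref{a2}, \eqref{f2}, \eqref{h2}, and a convolution estimate valid for $\al\ge1$, plus a propagator-mismatch contribution, which is the one genuinely nontrivial piece. For the unstable coefficients, the adapted initial condition \eqref{5.11} rewrites $\tilde\beta^{j_p}(t)$ as the convergent integral \eqref{5.12}, and repeating the Lipschitz argument of \propref{prop5.2} with the bound on $d(\si,\si')$ derived above gives the same $C\delta\,\norm{U-V}\br{t}^{-\al}$ control.

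The hard part will be the propagator mismatch. I would handle it via the standard Duhamel identity
\begin{equation*}
U_S(\si;t,s)-U_S(\si';t,s) = -\int_s^t U_S(\si;t,r)[L(\si(r))-L(\si'(r))]U_S(\si';r,s)\,dr,
\end{equation*}
combined with the bound $\norm{L(\si(r))-L(\si'(r))}_{X'\to X}\ls d(\si(r),\si'(r))$ from \eqref{C4} and the decay estimate \eqref{C3} applied on both factors. Convolving the two $\br{\cdot}^{-\al}$ weights against the already-controlled $d(\si(r),\si'(r))$ produces an $X'$-valued bound of the form $C\delta\,\norm{U-V}\br{t}^{-\al}$. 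Summing all contributions and inserting into \eqref{norm} gives $\norm{\Psi(\eta)U-\Psi(\eta)V}\le C\delta\,\norm{U-V}$; choosing $\delta$ small enough makes $\Psi(\eta)$ a strict contraction on $(\Ac_\delta,\norm{\cdot})$. This is the step where \eqref{C3} and \eqref{C4} must be used in tandem, and it is what dictates the polynomial decay exponent $\al\ge1$ throughout.
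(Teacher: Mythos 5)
Your proposal is correct and follows essentially the same route as the paper: split the $\si$-difference via \eqref{a1}--\eqref{a2} and \eqref{h2}, decompose the $w$-difference into stable and unstable components, run Duhamel with \eqref{C3} on the stable part, and use the backward integral representation \eqref{5.12} together with the Lipschitz argument of \propref{prop5.2} on the unstable coefficients. The one place you diverge is the ``propagator mismatch,'' which you flag as the hard step: the paper sidesteps it entirely by writing the equation for the difference of the two \emph{outputs} with the single frozen operator $L(\si^0)$ and moving $V(\si^0,\si^1)w^3=(L(\si^1)-L(\si^0))w^3$ into the source term (see \eqref{5.19} and \eqref{5.29}--\eqref{5.30}), which is exactly the integrated content of your Duhamel identity for $U_S(\si;t,s)-U_S(\si';t,s)$; the paper's bookkeeping buys you a single application of \eqref{C3} per term instead of a double convolution of decaying propagators, but the two arrangements are equivalent and both rest on \eqref{C4} in tandem with \eqref{C3}.
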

\begin{proof}
	1. Take two paths $U^n\in\Ac_\delta,\,n=0,1$. Let $U^{n+2}:=\Psi(U^n)$. Then $U^{n+2}\in\Ac_\delta$ by \thmname{thm5.1}. By the definition of $\Psi$ in \eqref{Psi}, the difference $U^3-U^2$ solves the linear system
	\begin{align}
		\label{5.19}
		\begin{split}
			\di_t(w^3-w^2)=&-L(\si^0)(w^3-w^2)+V(\si^0,\si^1)w^3\\&-df(\si^0)(\dot \si^3-\dot \si^2)-(df(\si^1)-df(\si^0))\dot\si^3\\&-N(U^1)+N(U^0),
		\end{split}\\
			\label{5.23'}
	V(\si^0,\si^1):=&L(\si^1)-L(\si^0):X'\to X,\\
		\label{5.20}
		(w^3-w^2)\vert_{t=0}=&(\beta^{j_p}(U^1)-\beta^{j_p}(U^0))\phi_{j_p}(\si_0).
	\end{align}

	2. We study \eqref{5.19}-\eqref{5.20} as in the proof of \thmref{thm5.1}. Let 
\begin{align}
	\label{5.21}
		&P_S(t):=\text{stable projection onto }\Sc(\si^0(t))\subset X,\\
		&L_S=L_S(t):=P_S(t)L(\si^0(t))P_S(t),\\
		\label{5.22}
		&l_{j_p}(t):u\mapsto \inn{u}{\phi^{j_p}(\si^0(t))}\text{as in \eqref{5.15}}.
\end{align}
	Projecting \eqref{5.19}-\eqref{5.20} onto the eigenspaces of $L(\si^0)$, we find that this system is equivalent to
	\begin{align}
		\label{5.23}
		\begin{split}\di_tP_S(w^3-w^2)=&-L_SP_S(w^3-w^2)\\&+P_S(V(\si^0,\si^1)w^3 -(df(\si^1)-df(\si^0))\dot\si^3-N(U^1)+N(U^0)),
		\end{split}\\
		\label{5.24}
		\dot \g^{j_p}=&-E_p\g^{j_p}-f^{j_p}(U^0,U^1,U^3),\\
		\label{5.25}
		f^{j_p}(U^0,U^1,U^3):=&l^{j_p}(V(\si^0,\si^1)w^3-(df(\si^1)-df(\si^0))\dot\si^3-N(U^1)+N(U^0)),\\
		\label{5.26}
		P_S(w^3-w^2)\vert_{t=0}=&0,\\
		\label{5.27}
		\g^{j_p}(0)=&\beta^{j_p}(U^1)-\beta^{j_p}(U^0).
	\end{align}
	Here $\g^{j_p}:=l^{j_p}(w^3-w^2)$. To get \eqref{5.23}-\eqref{5.27}, we use the orthogonality among various eigenvectors of the self-adjoint operator $L(\si^0)$.
	
	3. First, consider evolution of the stable mode, \eqref{5.23}. By Duhamel's principle and initial condition \eqref{5.27}, we have
	\begin{equation}
		\label{5.28}
		\begin{split}
			&P_S(w^3-w^2)\\=&\int_0^t e^{-(t-t')L_S(t')}\underbrace{P_S(V(\si^0,\si^1)w^3 -(df(\si^1)-df(\si^0))\dot\si^3-N(U^1)+N(U^0))}_{\text{all depend on }t'}\,dt'.
		\end{split}
	\end{equation}
	
	For any element $w\in X'$ with $\norm{w}_{X'}\ll1$, we write
\begin{equation}\label{5.29}
		V(\si^0,\si^1)w=(E'(f(\si^1)+w)-E'(f(\si^0)+w))-(N(\si^1,w)-N(\si^0,w)).
\end{equation}
	Since $E$ is $C^2$ on $U$, $E'$ is $C^1$ from $X'\to X$. Hence, the first term on the r.h.s. can be bounded as 
	$$\norm{E'(f(\si^1)+w)-E'(f(\si^0)+w)}_X\le c_1 \norm{f(\si^1)-f(\si^1)}_{X'}\le c_1d_f d(\si^0,\si^1). $$
	For the first inequality, we use the uniform bound assumption (C1) and the mean value theorem for Fr\'echet differentiable maps. For the second  inequality, we use the assumption \eqref{f2} on the parametrization $f:\Si\to X'$. Similarly, by assumption \eqref{2.17}, the second term in the r.h.s. of \eqref{5.29} can be bounded by $d(\si^0,\si^1)$ as well. 
	Hence, since $U^3\in\Ac_\delta$, we find 
	\begin{equation}
		\label{5.30}
		\norm{V(\si^0(t),\si^1(t))w^3(t)}_X\le c_1c_f \delta\br{t}^{-\al}d(\si^0,\si^1).
	\end{equation}
	
	Next, since $f$ is $C^2$ on $\Si_0$, we have $\norm{df(\si^1)-df(\si^0)}_{Y'\to X'}\ls d(\si^0,\si^1)$. This, together with the fact that $U^3\in\Ac_\delta$, implies
	\begin{equation}
		\label{5.30'}
		\norm{(df(\si^1)-df(\si^0))\dot\si^3}_{X}\le c_f\delta\br{t}^{-\al}d(\si^0(t),\si^1(t)).
	\end{equation}
	
	Using \eqref{5.30}-\eqref{5.30'} and another application of \eqref{5.29}, we conclude the Lipschitz estimate
	\begin{equation}
	\label{5.31}
	\begin{aligned}
		&\norm{V(\si^0(t),\si^1(t))w^3(t)-(df(\si^1(t))-df(\si^0(t)))\dot\si^3(t))- N(U^1(t))+N(U^0(t))}_X\\
		\le&\delta(c_f(c_1+1)\br{t}^{-\al}+1)(d(\si^0(t),\si^1(t))+\norm{w^1(t)-w^0(t)}_{X'})\\
		\le& \delta \norm{U^1-U^0}\br{t}^{-\al}(c_f(c_1+1)\br{t}^{-\al}+1)
	\end{aligned}
	\end{equation}
	where in the last step we use the definition \eqref{norm}.
	Plugging this back to \eqref{5.28}, and then using the propagator estimate \eqref{C3}, we conclude 
	\begin{equation}
		\label{5.32}
		\norm{P_S(w^3-w^2)}_X\le \delta  \norm{U^1-U^2}c_\al(t),
	\end{equation}
	where, with $c_2$ denoting the constant from \eqref{C3}, 
	$$c_\al(t):=c_2\int_0^t\br{t}^{-2\al}(c_f(c_1+1)\br{t}^{-\al}+1)\,dt.$$
	This integral is positive and of the order $O(\br{t}^{-\al})$ for $\al\ge1$.
	
	4. Next, fix $j_p$ and consider the ODE \eqref{5.24}. For simplicity of notation, in this step we drop the dependence on $p$ and $j_p$. All implicit constants are independent of $p,\,j_p$.
	
	Since $w^3(t),w^2(t)$ both tends to $0$ in $X'$ by the initial choice of $\beta$ in \thmref{thm5.1}, the function $\g(t)=l(\si^0(t))\to0$ as $t\to\infty$, Hence, as in Step 4 of \thmref{thm5.1},	we find that
	\begin{equation}
		\label{5.33}
		\abs {\g(t)}\le \int_t^\infty f(U^0(t'),U^1 (t'),U^3 (t'))e^{Et'}\,dt',
	\end{equation}
where $ E:=\sup_{t\ge0}E(t)<0$ by \eqref{C2}.

	Using estimate \eqref{5.31}, together with the definition \eqref{5.27}, we find that 
	\begin{equation}
		\label{5.34}
		\abs {\g(t)}\le\delta\norm{U^1-U^0} c_\al'(t),
	\end{equation}
	where
	$$c_\al'(t):=\int_t^\infty \br{t}^{-\al}(c_f(c_1+1)\br{t}^{-\al}+1) e^{Et}\,dt.$$
	This integral is finite, and $O(\br{t}^{-\al})$ for $\al\ge1$ and $E<0$.
	
	 5. Combining \eqref{5.34} and \eqref{5.32} gives the estimate \begin{equation}
	 \label{5.35}
	 	\norm{w^3-w^2}_{X'}\le C_\al\delta \norm{U^1-U^0}\del{(\sum_{p=1}^N m_p)+1}\br{t}^{-\al},
	 \end{equation}
	 where $C_\al>0$ depends on the decay property of $c_\al(t)+c_\al'(t)$ as $t\to\infty$ (this limit exists and is finite by the large time asymptotics on $c_\al(t),c_\al'(t)$).  
	 
	6. Next, consider the equation for $\dot\si ^3-\dot\si^2 $, which follow from the definition of $\Psi$ in \eqref{Psi} and \eqref{4.2},\eqref{4.6}:
	\begin{align}
		\label{5.36}
		\di_t(\si^3-\si^2)=&a^j(U^1)\xi_j(\si^1)-a^j(U^0)\xi_j(\si^0),\\
		\label{5.37}
		 (\si^3-\si^2)\vert_{t=0}=&0.
	\end{align}
	Write
	\begin{equation}
		\label{5.38}
		a^j(U^1)\xi_j(\si^1)-a^j(U^0)\xi_j(\si^0)=a^j(U^1)(\xi_j(U^1)-\xi_j(U^0))+(a^j(U^1)-a^j(U^0))\xi_j(U^0).
	\end{equation}
	By \corref{cor3.2}, the choice $U^1\in\Ac_\delta$, and the assumption \eqref{h2}, for  fixed $\delta\ll1$, the first term in the r.h.s. of \eqref{5.38}  is bounded as $$\norm{a^j(U^1(t))(\xi_j(U^1(t))-\xi_j(U^0(t)))}_Y\le c_h\delta\br{t}^{-\al}\norm{U^1-U^0}\quad(t\to\infty).$$
	By the Lipschitz estimate \eqref{a2}, similarly, the second term in the r.h.s. of of \eqref{5.38} is bounded as
	$$\norm{(a^j(U^1(t))-a^j(U^0(t)))\xi_j(U^0(t))}_Y\le c'c_h\delta\br{t}^{-\al}\norm{U^1-U^2}.$$
	Plugging the two preceding estimates back to \eqref{5.36}, we find 
	\begin{equation}\label{5.39}
		\norm{\di_t(\si^3-\si^2)}_Y\le (c'(c_h+1))\delta \br{t}^{-\al}. 
	\end{equation}

	7. Combining \eqref{5.35} and \eqref{5.39}, we conclude that if we choose
	$$\delta_0:=\frac{1}{2}\min\del{ \frac{1}{C_\al \del{(\sum_{p=1}^N m_p)+1}},\frac{1}{c'(c_h+1)}}$$
	 then for every $0<\delta\le\delta_0$, there holds
	$$\norm{U^3-U^2}\le \frac{1}{2}\norm{U^1-U^0}.$$
	This shows that $\Psi$ is a contraction so long as $\delta\ll1$.
\end{proof}

\section{Stable manifold}\label{sec:6}
Recall that in \defnref{sm}, we have defined the set
\begin{equation}\label{6.1}
	\Mc=\Mc(\si_0,\delta):=\Set{f(\si_0)+\eta+\Phi(\eta):\eta\in\Sc(\si_0),\,\norm{\eta}_X'<\delta,\,\Phi\text{ as in \eqref{Phi}}}.
\end{equation}
By \propref{prop4.1} and \thmref{thm5.2}, each vector $u_0\in\Mc$ generates a global dissipating solution $u\in\Xc$ (see \eqref{4.1}) to \eqref{1} with $u(0)=u_0$. Moreover, the evolution of this $u$ is essentially governed by an equation in the manifold of static solutions $f(\Si)$, namely \eqref{2.23}.

In Sections \ref{sec:4}-\ref{sec:5}, for the most parts we have been considering the property of the $\Psi(\eta)$ map, given in \eqref{Psi}, with fixed $\eta$. In order to justify the manifold structure on $\Mc$, we need to establish suitable smallness estimates for the map $\Phi$ from \eqref{Phi} in terms of $\eta$. This is the main goal of this section, corresponding to the claimed inequalities \eqref{2.19}-\eqref{2.20} in the main theorem. 
\begin{theorem}
	\label{thm6.1}
For every $\eta_0,\,\eta_1\in\Sc(\si_0)$ with $\norm{\eta_0}+\norm{\eta_1}\le\delta$, there hold
	\begin{align}
		\label{6.2}
		\norm{\Phi(\eta_0)}_{X'}=&o(\norm{\eta_0}_{X'})\quad (\delta\to0),\\
		\label{6.3}
		\norm{\Phi(\eta_0)-\Phi(\eta_1)}_{X'}=&O(\delta\norm{\eta_0-\eta_1}_{X'})\quad(\delta\to0),
	\end{align}
c.f. \eqref{2.19}-\eqref{2.20}. 
\end{theorem}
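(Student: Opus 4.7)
The plan is to leverage the fixed-point characterization $U_\infty(\eta)=\Psi(\eta)(U_\infty(\eta))$ from \lemref{lem5.1} and the uniform $\tfrac{1}{2}$-contraction property of $\Psi(\eta)$ on $\Ac_\delta$ from \thmref{thm5.2}, together with the identity $\beta^{j_p}(\eta)=\beta^{j_p}(U_\infty(\eta))$, to reduce \eqref{6.2}-\eqref{6.3} to size and Lipschitz estimates on the fixed point $U_\infty(\cdot)$ itself, and then to invoke the corresponding estimates \eqref{5.13}-\eqref{5.14} from \propref{prop5.2} in the limit.

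For the size bound, I will compare $U_\infty(\eta)$ with the trivial path $(\si_0,0)\in\Ac_\delta$. A direct computation from \eqref{4.2}-\eqref{4.3} with input $(\sn,\wn)=(\si_0,0)$ gives $\Psi(\eta)(\si_0,0)=(\si_0,\,t\mapsto e^{-tL(\si_0)}\eta)$: indeed $N(\si_0,0)=0$ forces $\beta^{j_p}(\si_0,0)=0$ via \eqref{5.11} and $a=0$ via \eqref{3.12}, so the $\si$-component stays constant and the $w$-component is the linear propagation of $\eta$. Since $\eta\in\Sc(\si_0)$, condition \eqref{C3} yields $\norm{\Psi(\eta)(\si_0,0)}\le c_2\norm{\eta}_{X'}$ in the norm \eqref{norm}. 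The decomposition
\[
U_\infty(\eta)-(\si_0,0)=[\Psi(\eta)(U_\infty(\eta))-\Psi(\eta)(\si_0,0)]+[\Psi(\eta)(\si_0,0)-(\si_0,0)]
\]
combined with the contraction bound on the first bracket yields $\norm{U_\infty(\eta)}\le 2c_2\norm{\eta}_{X'}$. Passing to the limit $n\to\infty$ in \eqref{5.13} (valid by the continuity afforded by \eqref{5.14}) then gives $|\beta^{j_p}(\eta)|=o(\norm{U_\infty(\eta)})=o(\norm{\eta}_{X'})$ as $\delta\to0$. Summing over the finitely many indices and using the uniform bound $\norm{\phi_{j_p}(\si_0)}_{X'}\ls 1$ proves \eqref{6.2}.

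For \eqref{6.3}, I use the analogous decomposition
\[
U_\infty(\eta_0)-U_\infty(\eta_1)=[\Psi(\eta_0)(U_\infty(\eta_0))-\Psi(\eta_0)(U_\infty(\eta_1))]+[\Psi(\eta_0)(U_\infty(\eta_1))-\Psi(\eta_1)(U_\infty(\eta_1))].
\]
The first bracket is controlled by $\tfrac{1}{2}\norm{U_\infty(\eta_0)-U_\infty(\eta_1)}$ via the contraction. For the second bracket, observe that both $\Psi(\eta_0)(U_\infty(\eta_1))$ and $\Psi(\eta_1)(U_\infty(\eta_1))$ solve the \emph{same} system \eqref{4.2}-\eqref{4.3}, since the coefficients $a^j$ and $\beta^{j_p}$ from \eqref{5.11} depend only on the input path $U_\infty(\eta_1)$, not on $\eta$; the only difference is in the initial datum of the $w$-component, which equals exactly $\eta_0-\eta_1\in\Sc(\si_0)$. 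By linearity in the initial datum together with \eqref{C3} for the stable part and the strict negativity $E<0$ for the unstable parts (a verbatim replay of Steps 3-4 of the proof of \thmref{thm5.2}), the second bracket is bounded by $C_0\norm{\eta_0-\eta_1}_{X'}$. Combining gives $\norm{U_\infty(\eta_0)-U_\infty(\eta_1)}\le 2C_0\norm{\eta_0-\eta_1}_{X'}$. Passing to the limit in \eqref{5.14} then yields $|\beta^{j_p}(\eta_0)-\beta^{j_p}(\eta_1)|=O(\delta\norm{U_\infty(\eta_0)-U_\infty(\eta_1)})=O(\delta\norm{\eta_0-\eta_1}_{X'})$, which sums to \eqref{6.3}.

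The main technical obstacle will be bounding the second bracket: the initial datum $\eta_0-\eta_1$ lies in $\Sc(\si_0)$, but the non-autonomous propagator generated by $L(\sn(t))$ mixes $\Sc(\si_0)$ with the (rotating) unstable subspaces $\spn\{\phi_{j_p}(\sn(t))\}$ along the path. The smallness hypothesis \eqref{C4}, together with the a priori control $d(\sn(t),\si_0)\ls\delta$ that follows from $U_\infty(\eta_1)\in\Ac_\delta$ (up to a harmless logarithmic factor when $\al=1$), should allow one to treat this mixing as a $\delta$-small perturbation of the autonomous flow at $\si_0$, repeating the projection bookkeeping from the proof of \thmref{thm5.2} with $\eta_0-\eta_1$ playing the role there of $(\beta^{j_p}(U^1)-\beta^{j_p}(U^0))\phi_{j_p}(\si_0)$. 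Once this perturbative estimate is in place, the remainder of the argument is a direct application of \propref{prop5.2}.
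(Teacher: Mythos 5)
Your proposal is correct and follows essentially the same route as the paper: both arguments rest on the contraction property of $\Psi(\eta)$ from \thmref{thm5.2}, the estimates \eqref{5.13}--\eqref{5.14} of \propref{prop5.2}, and the observation that for a fixed input path the two solution maps $\Psi(\eta_0)$, $\Psi(\eta_1)$ differ only through the initial datum $\eta_0-\eta_1\in\Sc(\si_0)$, whose propagation is controlled by \eqref{C3} for the stable part and by the $\beta$-type backward integral for the unstable part (the paper's estimate \eqref{6.8}). The only difference is presentational: you package the argument via the standard a priori fixed-point bounds on $U_\infty(\eta)$ (comparing with the explicitly computable $\Psi(\eta)(\si_0,0)=(\si_0,\,e^{-tL(\si_0)}\eta)$), whereas the paper runs the same estimates on finite iterates $U^{(m)}$ with an $\eps$-approximation before passing to the limit.
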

\begin{remark}
	Notice the difference between these and \propref{prop5.2}. Here $\eta$ is allowed to vary in a small ball in $\Si(\si_0)$.
\end{remark}
\begin{proof}[Proof of \thmref{thm6.1}]
		1. We first prove \eqref{6.2}. Take $\eta_0\in \Sc(\si_0)$ with $\norm{\eta_0}_{X'}\le\delta\ll1$.Fix some $0<\eps\ll1$ to be determined. As in the beginning of \secref{sec:5.2}, we iterate $\Psi=\Psi(\eta_0)$ for 
		$m$ times on a fixed $U^{(0)}\in \Ac_\delta$ to get a sequence of paths  $U^{(m)}\in \Ac_\delta$. For each fixed $j_p$, we also geta sequence of numbers  $\beta^{(m)}:=\beta^{j_p}(\eta_0, U^{(m-1)})\to \beta:=\beta^{j_p}(\eta_0)$, where for the transparency of notation the superindex on $j_p$ is dropped. 
		
	By \lemref{lem5.1} and \thmref{thm5.2}, for every $\eps>0$ there exists $m=m(\eps)\gg1$ s.th.
	s.th. 
	\begin{equation}
		\label{6.2'}
		\abs{\beta^{(m)}-\beta}<\eps.
	\end{equation}

	Consider the path $w^{(m+1)}$ with $m=m(\eps)\gg1$. This solves the Cauchy problem
	\begin{align}
		\label{6.3'}
		\dot w^{(m+1)}=&-L(\si^{(m)})w^{(m+1)}- N(\si^{(m)},\xi^{(m)})-a^j(U^{(m)})df(\si^{(m)})\xi_j(\si^{(m)}),\\
		\label{6.4}
		w^{(m+1)}(0)=& \eta_0+\beta_{j_1}^{(m)}\phi_{j_1}(\si_0)+\ldots+\beta_{j_N}^{(m)}\phi_{j_N}(\si_0)
	\end{align}
	Using \eqref{5.7}-\eqref{5.8'},  together \eqref{5.11'}, we find that for every fixed $t$, there holds 
	\begin{equation}\label{6.5}
		\norm{w^{(m+1)}(t)}_{X'}\le 2c_2\norm{\eta_0}_{X'}+o(\delta\br{t}^{-\al})\quad(\delta\to0).
	\end{equation}	Here $c_2$ is the constant in the propagator estimate \eqref{C3}.

We claim now
\begin{equation}\label{6.6}
		\beta^{(m+1)}= o(\norm{\eta_0}_{X'})\quad(\delta\to0).
\end{equation}
	Recall that by assumption, $\norm{\eta_0}\ls\delta$. If $\norm{\eta_0}\sim\delta$, then \eqref{6.6} follows from \eqref{5.13} and  estimate \eqref{6.5}. If $\norm{\eta_0}\sim\delta_0\ll1$, then we can repeat the argument in \secref{sec:5} with $\delta_0$ in place of $\delta$ everywhere, to obtain \eqref{6.8} with $\delta_0$ in place of $\delta$. This proves the claim.
	
Estimate \eqref{6.6}, together with \eqref{6.2'}, gives 
 $$\beta\le\eps+ 3c_2\norm{\eta_0}_{X'}+\eps.$$
Since $\eps$ is arbitrary, this implies \eqref{6.2}.

	2. Next, we prove \eqref{6.3}. Recall the map $\Psi(\eta),\,\eta\in\Sc(\si_0)$ is defined in \eqref{Psi}. For every fixed $U\in\Ac_\delta$, we claim the following  Lipschitz estimates where the implicit constants are independent of $U$:
	\begin{align}
		\label{6.7}
		\norm{\Phi(\eta_0)-\Phi(\eta_1)}_{X'}&\ls \delta \lim_{m\to\infty}\norm{\Psi^{(m)}(U,\eta_0)-\Psi^{(m)}(U,\eta_1)}\\
		\label{6.8}\norm{\Psi(U,\eta_0)-\Psi(U,\eta_1)}&\ls \norm{\eta_0-\eta_1}_{X'}.
	\end{align}
	Here the notation $\Psi^{(m)}$ means iterating the map $m$ times.Notice that the limit in \eqref{6.7} exists, since by 	\thmref{thm5.2}, the sequence  $\Psi^{(m)}(U,\eta_n),\,n=0,1$ converges as $M\to\infty$ to the fixed point of $\Psi(\eta_n)$ in the space $\Ac_\delta$.
	
		3. We first prove \eqref{6.7}, \textit{assuming \eqref{6.8} holds}. We do this by adapting the construction from Step 1 for $n=0,1$. 
		
	As in Step 1, for $n=0,1$ and every fixed $j_p$,  put \begin{align}
		&\label{Um}U^{(0)}\in\Ac_\delta,\quad U^{(m)}_n=\Psi(\eta_n)(U^{(m-1)}),\quad m=1,2,\ldots,\\
		&\label{betam}\beta^{(m)}_n:=\beta^{j_p}(\eta_n, U^{(m-1)}),\quad  \beta_n:=\lim_{m\to\infty}\beta^{(m)}_n.
	\end{align}
The function $\beta^{j_p}$ in \eqref{betam} is defined in \eqref{5.11}.

	For every $0<\eps\ll1$, we choose some $m=m(\eps)\gg1$ s.th.
	\begin{equation}\label{6.9}
			\abs{\beta_i(\eta_1)-\beta_i(\eta_0)}\le  \eps+\int_0^\infty \abs{f_1(U^{(M)}(t)-f_0(t)},
	\end{equation}
	where, for $n=0,1,$, $m=1,2,\ldots$, and $t\ge0$, we put
	\begin{equation}
		\label{6.10}
		\fullfunction{f_n^{(m)}}{ \Rb_{\ge0}}{\Rb}{t}{\inn{N(U_n^{(m)}(t))}{\phi^{j_p}(\si^{(m)}(t))}}.
	\end{equation}
	Here $f_n^{(m)}$ depends on $\eta_n$ through definition of $U^{(m)}_n$, see\eqref{Um}. 
	
	The claim now is that there exists $C>0$ independent of $m,t$ s.th. 
	\begin{equation}\label{6.11}
		\abs{f_1^{(m)}(t)-f_0^{(m)}(t)}\le C \delta\br{t}^{-\al}\norm{U^{(M)}_n-U^{(M)}_n}.
	\end{equation}
	If \eqref{6.11} holds, then plugging it into \eqref{6.9}
	and taking $\eps\to 0,\,m(\eps)\to\infty$ gives \eqref{6.7}.

	In view of the definition of $f_n^{(m)}$ from \eqref{6.10} and the Lipschitz
	estimate \eqref{2.17}, to get \eqref{6.11},
	it suffices to show that the paths 
	$U^{(m)}_0,\,U^{(m)}_1$ remains uniformly 
	close in $\Ac_\delta$ for all large $m$.
	The latter follows from the definition \eqref{Um}, and 
	the uniform bound \eqref{6.8}, provided $\eta_0$ 
	and $\eta_1$ are close (which is indeed the case, since $\norm{\eta_0}+\norm{\eta_1}\ll\delta$).
	This proves \eqref{6.7}.
	
	4. It remains to prove \eqref{6.8}. 
	Fix a path $U=(\si,w)\in\Ac_\delta$, and let
	$$(\si^n,w^n):=\Psi(U,\eta_n),\quad n=0,1.$$
	By the definition of $\Psi$ from \eqref{Psi}, we find that the difference $w^1-w^0$ satisfies
	the Cauchy problem
	\begin{align}
		\label{6.12}
		\di_t(w^1-w^0)-L(a)(w^1-w^0)&=0,\\
		\label{6.13}
		\begin{split}
			(w^1-w^0)\vert_{\tau=0}&=\eta_1-\eta_0\\&+\sum_p(\beta^{j_p}(\eta_1,U)\phi_{j_p}(\si)-\beta^{j_p}(\eta_0,U)\phi_{0_p}(\si)
		\end{split}\\
		\label{6.14} 
		\di_t(\si^1-\si^0)&=0,\\
		\label{6.15}
		(\si^1-\si^0)\vert_{\tau=0}&=0.
	\end{align}

	\eqref{6.14}-\eqref{6.15} implies $\si^1(t)-\si^0(t)\equiv0$.
	Hence it suffices to study \eqref{6.12}-\eqref{6.13} only. As in the proof of \thmref{thm5.1}. Indeed, if we decompose $w^1(t)-w^0(t)$ in terms of the eigenfunctions of $L(\si(t))$, then the projection of this difference along the stable modes of $L(\si(t))$ satisfies an decay estimate of the form \eqref{5.7}, and the projections along the unstable	modes satisfy an remainder estimate of the form
	\eqref{5.11'}. The latter is due to the choice of initial condition \eqref{6.13}, as we have shown in the proof of \thmref{thm5.2}. We conclude estimate 
	$$\norm{w^1(t)-w^0(t)}_{X'}\ls \br{t}^{-\al}\norm{\eta_1-\eta_0}_{X'}.$$
	This proves \eqref{6.8}.
\end{proof}

		\section*{Acknowledgment}
		The Author is supported by Danish National Research Foundation grant CPH-GEOTOP-DNRF151 and The Niels Bohr Grant from the Royal Danish Academy of Sciences and Letters. The Author thanks IM Sigal for support and hospitality in the completion of this work during a visit to the University of Toronto.   
		
		\section*{Declarations}
		\begin{itemize}
			\item Conflict of interest: The Author has no conflicts of interest to declare that are relevant to the content of this article.
			\item Data availability: Data sharing is not applicable to this article as no datasets were generated or analysed during the current study.
		\end{itemize}

		\appendix
		\section{Basic variational calculus}
		Here we recall some basic elements of variational calculus that have been used 
		repeatedly. For details, see for instance \cite{MR2431434}*{Appendix C}, \cite{MR1336591}*{Chapt. 1}.
		
		\subsection{Fr\'echet Derivative} Let $X,\,Y$ be two Banach spaces. Let $U$ be an open set in $X$.
		For a map $g:U\subset X\to Y$ and a vector $u\in U$,
		the Fr\'echet derivative $dg(u)$ is a linear map from $X\to Y$ s.th.
		$g(u+v)-g(u)-dg(u)v=o(\norm{v}_X)$ for every $v\in X$ with $\norm{v}_X\ll1$ . 
		If $dg(u)$ exists at $u$, then it is unique. If $dg(u)$ exists for every $u\in U$,
		and the map $u\mapsto dg(u)$ is continuous from $U$ to the space of linear operators $L(X,Y)$, then we
		we say $g$ is $C^1$ on $U$. 
		In this case, $dg(u)$ is uniquely given by 
		$$v\mapsto {\od{g(u+tv)}{t}}\vert_{t=0}\quad (v\in X).$$
		Iteratively, we can define higher order derivatives this way.

		\subsection{Gradient and Hessian} If $X$ is a Hilbert space over a scalar field $Y$, then by Riesz representation,
		we can identify $dg(u)$ as an element in $X$, denoted by $\Si(u)$. The vector
		$\Si(u)$ is called the $X$-gradient of $g$. 
		Similarly, we denote $\Si'(u)$ the second-order Fr\'echet derivative
		$d^2g(u)$. If $g$ is $C^2$, then $\Si'$ can be identified as a symmetric 
		linear operator
		uniquely determined  by the relation  
		$$\inn{\Si'(u)v}{w}={\md{g(u+tv+sw)}{2}{t}{}{s}{}}\vert_{s=t=0}\quad (v,w\in X).$$
		
		\subsection{Remainder and Composition} Let $X$ be a Hilbert space over a scalar field $Y$. Suppose $g$ is $C^2$ on $U\subset X$. Define a scalar function $\phi(t):=g(v+tw)$ for vectors
		$v,w$ s.th. $v+tw\in U$ for every $0\le t\le 1$.
		Then the elementary Taylor expansion at $\phi(1)$ gives 
		$$g(v+w)=g(v)+\inn{\Si(v)}{w}+\frac{1}{2}\inn{\Si'(v)w}{w}+o(\norm{w}_X^2).$$
		Here we have used the definition of $\Si$ and $\Si'$ from the last subsection.
		
		Let $\Omega\subset \Rb^d$ be a bounded domain with smooth boundary.
		Fix $r>d/2,\,f\in C^{r+1}(\Rb^n)$. For $u:\Omega\to \Rb^n$,
		define a map $g:u\mapsto f\circ u$. Then $g:H^r(\Omega)\to H^r(\Omega)$
		is $C^1$, 
		and the Fr\'echet derivative is given by $v\mapsto \grad f\cdot v$.

		\bibliography{bibfile}
	\end{document}